\newtheorem{remark}{Remark}
 \newtheorem{lemma}[remark]{Lemma}
 \newtheorem{theorem}[remark]{Theorem}
 \newtheorem{corollary}[remark]{Corollary}
\algnewcommand\algorithmicforeach{\textbf{for each}}
\title{On distances in generalized Sierpi\'{n}ski graphs}
\author{Alejandro Estrada-Moreno$^{(*)}$, Erick D. Rodr\'{i}guez-Bazan $^{(**)}$\\ Juan A. Rodr\'{\i}guez-Vel\'{a}zquez$^{(*)}$ 
\\
$^{(*)}${\small Departament d'Enginyeria Inform\`atica i Matem\`atiques,}\\
{\small Universitat Rovira i Virgili,}  {\small Av. Pa\"{\i}sos
Catalans 26, 43007 Tarragona, Spain.} \\{\small
alejandro.estrada\@@urv.cat, juanalberto.rodriguez\@@urv.cat} 
\\
$^{(**)}${\small Department of Matemathics,}\\
{\small Central University of Las Villas,}{ \small Carretera a Camajuan\'{i} km. $5\frac{1}{2}$. Villa Clara, Cuba.} \\{\small
erickrodriguezbazan\@@gmail.com}
}
\begin{document}
\maketitle

\begin{abstract}
In this paper we propose formulas for the distance between vertices of a generalized Sierpi\'{n}ski graph $S(G,t)$ in terms of the distance between vertices of the base graph $G$. In particular, we deduce a recursive formula for the distance 
between an arbitrary vertex and an  extreme vertex of  $S(G,t)$, and we obtain a recursive formula for the distance between two arbitrary vertices of $S(G,t)$  when the base graph is triangle-free.
From these recursive formulas, we provide algorithms to compute the distance between vertices of $S(G,t)$.
In addition,  we give an explicit formula for the diameter and radius of $S(G,t)$ when the base graph is a tree.  
\end{abstract}

\section{Introduction}

Let $G=(V,E)$ be a non-empty graph, and $t$ a positive integer. We denote by $V^t $ the set of words of length $t$ on alphabet $V $. The letters of a word $u$ of length $t$ are denoted by $u_1u_2...u_t$. The concatenation of two words $u$ and $v$  is denoted by $uv$. Klav\v{z}ar and Milutinovi\'c introduced in \cite{Klavzar1997} the graph  $S(K_n, t)$, $t\ge 1$,  whose vertex set is $V^t$, where
$\{u,v\}$ is an edge if and only if there exists $i\in \{1,...,t\}$ such that:
$$ \mbox{ (i) }  u_j=v_j, \mbox{ if } j<i; \mbox{ (ii) } u_i\ne v_i; \mbox{ (iii) } u_j=v_i \mbox{ and } v_j=u_i  \mbox{ if } j>i.$$
As noted in \cite{Hinz2013},  in a compact form, the edge set can be described as
$$\{\{wu_iu_j^{d-1},wu_ju_i^{d-1}\}:\,  u_i,u_j\in V, i\ne j; d\in [t]; w\in V^{t-d} \}.$$
The graph $S(K_3,t)$ is isomorphic to 
the graph of the  Tower of Hanoi with $t$ disks \cite{Klavzar1997}. Later, those graphs have been
called Sierpi\'{n}ski graphs in \cite{Klavzar2002} and they were studied by now from numerous points of view.
For instance,  the authors of \cite{Gravier...Parreau} studied identifying codes, locating-dominating codes, and total-dominating codes in Sierpi\'{n}ski graphs. In \cite{Hirtz-Holz} the authors  propose an algorithm, which makes use of three automata and the fact that there are at most two internally vertex-disjoint shortest paths between any two vertices, to determine all shortest paths in Sierpi\'{n}ski graphs. The authors of \cite{Klavzar2002} proved that for any $n\ge 1$ and $t\ge 1$, the Sierpi\'{n}ski graph $S(K_n,t)$ has a unique 1-perfect code (or efficient dominating set) if $t$ is even, and $S(K_n,t)$ has exactly $n$ 1-perfect codes if $t$ is odd. 
The Hamming dimension of a graph $G$ was introduced in \cite{Klavsar-Peterin} as the largest dimension of a Hamming graph into which $G$ embeds as an irredundant induced subgraph. 
That paper gives an upper bound for the Hamming dimension of the Sierpi\'{n}ski graphs $S(K_n,t)$ for $n\ge 3$. It also shows that the Hamming dimension of $S(K_3,t)$ grows as $3^{t-3}$. 
The idea of almost-extreme vertex of $S(K_n,t)$ was  introduced in \cite{Klavsar-Zeljic} as a
vertex that is either adjacent to an extreme vertex of $S(K_n,t)$
or is incident to
an edge between two subgraphs of $S(K_n,t)$
isomorphic to $S(K_n,t-1)$.  The authors of \cite{Klavsar-Zeljic} deduced explicit formulas  for the distance in $S(K_n,t)$
between an arbitrary vertex and an almost-extreme vertex.
Also they gave  a formula of the metric dimension of a Sierpi\'{n}ski graph, which was independently obtained by Parreau in her Ph.D. thesis.
The set $S_u=\{v\in V(S(K_n,t)): \text{ there exist two shortest } u, v-\text{paths in } S(K_n,t)\}$, where $u$ is any almost-extreme vertex of $S(K_n,t)$, was completely determined in \cite{Xue2014}.
The eccentricity of an arbitrary vertex of Sierpi\'{n}ski graphs was studied in \cite{Hinz2012} where the main result  gives an expression for the average eccentricity of $S(K_n,t)$. For a general background  on Sierpi\'{n}ski graphs, the reader is
invited to read  the comprehensive survey \cite{Klavzar2016(survey)}  and references therein.

This construction was generalized in \cite{GeneralizedSierpinski} for any graph $G=(V,E)$, by defining the $t$-th \emph{generalized Sierpi\'{n}ski graph} of $G$, denoted by  $S(G,t)$,  as the graph with vertex set $V^t$ and edge set $\{\{wu_iu_j^{d-1},wu_ju_i^{d-1}\}:\,  \{u_i,u_j\}\in E; d\in [t]; w\in V^{t-d} \}.$ Figure \ref{FigureExDistance} shows the graph $S((K_2)^c+K_2,3)$.

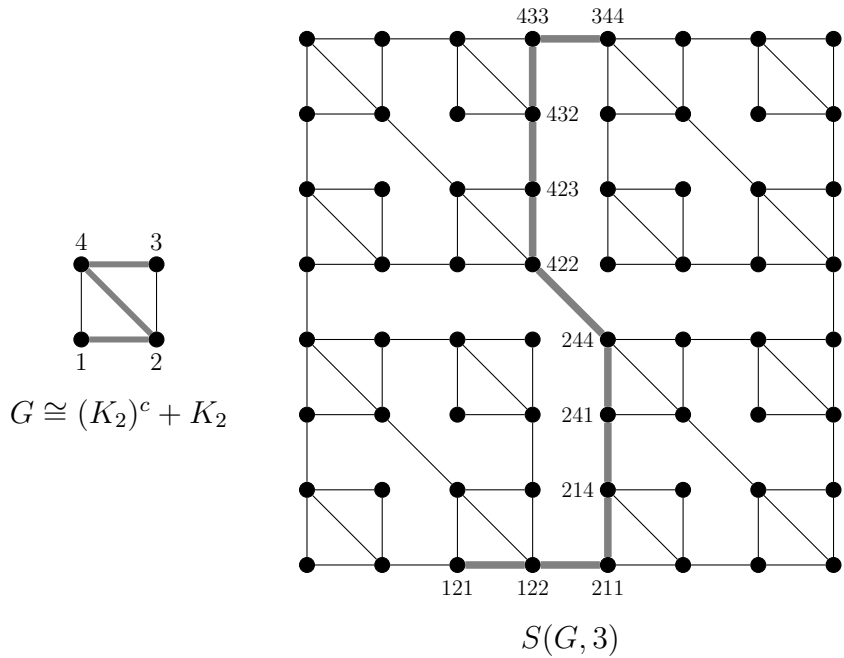
\begin{figure}[!ht]
\centering
\begin{tikzpicture}[transform shape, inner sep = .7mm]
\node [draw=black, shape=circle, fill=black] (1) at (0,0) {};
\node [scale=.8] at ([yshift=-.3 cm]1) {$1$};
\node [draw=black, shape=circle, fill=black] (2) at (1,0) {};
\node [scale=.8] at ([yshift=-.3 cm]2) {$2$};
\node [draw=black, shape=circle, fill=black] (3) at (1,1) {};
\node [scale=.8] at ([yshift=.3 cm]3) {$3$};
\node [draw=black, shape=circle, fill=black] (4) at (0,1) {};
\node [scale=.8] at ([yshift=.3 cm]4) {$4$};
\draw (2)--(1)--(4)--(2)--(3)--(4);
\draw[gray, line width=0.8 mm] (1)--(2)--(4)--(3);
\node at ([shift=({(.5,-1)})]1) {$G\cong (K_2)^c+K_2$};

\pgfmathsetmacro\traslationXThree{3};
\pgfmathsetmacro\traslationYThree{-3};
\foreach \lThree in {1,...,4}
{
\ifthenelse{\lThree=1}
{
\pgfmathsetmacro\traslationXTwo{\traslationXThree};
\pgfmathsetmacro\traslationYTwo{\traslationYThree};
}
{
\ifthenelse{\lThree=2}
{
\pgfmathsetmacro\traslationXTwo{\traslationXThree+4};
\pgfmathsetmacro\traslationYTwo{\traslationYThree};
}
{
\ifthenelse{\lThree=3}
{
\pgfmathsetmacro\traslationXTwo{\traslationXThree+4};
\pgfmathsetmacro\traslationYTwo{\traslationYThree+4};
}
{
\pgfmathsetmacro\traslationXTwo{\traslationXThree};
\pgfmathsetmacro\traslationYTwo{\traslationYThree+4};
};
};
};


\foreach \lTwo in {1,...,4}
{
\ifthenelse{\lTwo=1}
{
\pgfmathsetmacro\x{\traslationXTwo};
\pgfmathsetmacro\y{\traslationYTwo};
}
{
\ifthenelse{\lTwo=2}
{
\pgfmathsetmacro\x{\traslationXTwo+2};

\pgfmathsetmacro\y{\traslationYTwo};
}
{
\ifthenelse{\lTwo=3}
{
\pgfmathsetmacro\x{\traslationXTwo+2};
\pgfmathsetmacro\y{\traslationYTwo+2};
}
{
\pgfmathsetmacro\x{\traslationXTwo};
\pgfmathsetmacro\y{\traslationYTwo+2};
};
};
};
\node [draw=black, shape=circle, fill=black] (\lThree\lTwo 1) at (\x,\y) {};
\node [draw=black, shape=circle, fill=black] (\lThree\lTwo 2) at (\x+1,\y) {};
\node [draw=black, shape=circle, fill=black] (\lThree\lTwo 3) at (\x+1,\y+1) {};
\node [draw=black, shape=circle, fill=black] (\lThree\lTwo 4) at (\x,\y+1) {};
\draw (\lThree\lTwo 2)--(\lThree\lTwo 1)--(\lThree\lTwo 4)--(\lThree\lTwo 2)--(\lThree\lTwo 3)--(\lThree\lTwo 4);
}
\foreach \u/\v in {1/2,2/3,3/4,1/4,2/4}
{
\draw (\lThree\u\v) -- (\lThree\v\u);
}
}
\foreach \u/\v in {1/2,2/3,3/4,1/4,2/4}
{
\draw (\u\v\v) -- (\v\u\u);
}
\node at ([shift=({(3.5,-1)})]111) {$S(G,3)$};
\node [scale=.7] at ([yshift=-.3 cm]121) {$121$};
\node [scale=.7] at ([yshift=-.3 cm]122) {$122$};
\node [scale=.7] at ([yshift=-.3 cm]211) {$211$};
\node [scale=.7] at ([xshift=-.4 cm]214) {$214$};
\node [scale=.7] at ([xshift=-.4 cm]241) {$241$};
\node [scale=.7] at ([xshift=-.4 cm]244) {$244$};
\node [scale=.7] at ([xshift=.4 cm]422) {$422$};
\node [scale=.7] at ([xshift=.4 cm]423) {$423$};
\node [scale=.7] at ([xshift=.4 cm]432) {$432$};
\node [scale=.7] at ([yshift=.3 cm]433) {$433$};
\node [scale=.7] at ([yshift=.3 cm]344) {$344$};
\draw[gray, line width=1 mm] (121)--(122)--(211)--(214)--(241)--(244)--(422)--(423)--(432)--(433)--(344);
\end{tikzpicture}
\caption{The distance between vertices $121$ and $344$ in $S(G,3)$ is $10$.}
\label{FigureExDistance}
\end{figure}

Notice that if $\{u,v\}$ is an edge of $S(G,t)$, there is an edge $\{x,y\}$ of $G$ and a word $w$ such that $u=wxyy\dots y$ and $v=wyxx\dots x$. In general, $S(G,t)$ can be constructed recursively from $G$ with the following process: $S(G,1)=G$ and, for $t\ge 2$, we copy $n$ times $S(G, t-1)$ and add the letter $x$ at the beginning of each label of the vertices belonging to  the copy of $S(G,t-1)$ corresponding to $x$. Then for every edge $\{x,y\}$ of $G$, add an edge between vertex $xyy\dots y$ and vertex $yxx\dots x$. Vertices of the form $xx\dots x$ are called \textit{extreme vertices} of $S(G,t)$. Notice that for any graph $G$ of order $n$ and any integer $t\ge 2$,  $S(G,t)$  has $n$ extreme vertices and, if $x$ has degree $d(x)$ in $G$, then the extreme vertex $xx\dots x$ of $S(G,t)$   also has degree  $d(x)$. Moreover,   the degrees of two vertices $yxx\dots x$ and  $xyy\dots y$, which connect two copies of $S(G,t-1)$, are  equal to  $d(x)+1$ and $d(y)+1$, respectively. 

The authors of  \cite{GeneralizedSierpinski} announced some results about generalized Sierpi\'{n}ski graphs concerning their automorphism groups  and perfect codes. In our opinion, these results definitely deserve
to be published.
In the first published article  on
this subject   the authors obtained closed formulae for the Randi\'{c} index of polymeric networks modelled by  generalized Sierpi\'{n}ski graphs \cite{Rodriguez-Velazquez2015}, while in \cite{2015arXiv151007982E}  this work was extended to the so-called generalized Randi\'{c} index. Later, the total chromatic number of generalized Sierpi\'{n}ski graphs was  studied  in \cite{Geetha2015} and the strong metric dimension has recently been  studied in \cite{Rodriguez-Velazquez2016}.
The authors of \cite{Rodriguez-Velazquez2015a} obtained closed formulae for the chromatic, vertex cover, clique and domination numbers of generalized Sierpi\'{n}ski graphs $S(G,t)$ in terms of parameters of the base graph $G$. More recently,   a general upper bound on the Roman domination number of $S(G,t)$ was obtained in \cite{Ramezani2016}. In particular, it was studied the case in which the base graph $G$ is a path, a cycle, a complete graph or a graph having exactly one universal vertex.
In this paper we propose formulas for the distance between vertices of a generalized Sierpi\'{n}ski graph  in terms of the distance between vertices of the base graph. In particular, we deduce a recursive formula for the distance 
between an arbitrary vertex and an  extreme vertex of  $S(G,t)$, and we obtain a recursive formula for the distance between two arbitrary vertices of $S(G,t)$  when the base graph is triangle-free.
From these recursive formulas, we provide algorithms to compute the distance between vertices of $S(G,t)$.
In addition,  we give an explicit formula for the diameter and radius of $S(G,t)$ when the base graph is a tree.

\section{Distance 
between an arbitrary vertex and an  extreme vertex}

For any $t\ge 2$  the subgraph $\langle V_x \rangle$ of $S(G,t)$, induced by $V_x=\{xw:\; x\in V,w\in V^{t-1}\}$, is isomorphic to $
S(G,t-1)$. Note that $\langle V_x\rangle$ contains exactly one extreme vertex of $S(G,t)$. 
\begin{lemma}\label{lemmaDGInside}
Let $G=(V,E)$ be a connected non-trivial graph. For any  $x\in V $, $w,w'\in V^{t-1}$ and any integer $t\ge 2$, $$d_{S(G,t)}(xw, xw')=d_G(w,w').$$
\end{lemma}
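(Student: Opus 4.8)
The plan is to establish the two inequalities separately, the easy one being that $\langle V_x\rangle$, the subgraph induced by $V_x$, is an \emph{induced} subgraph isomorphic to $S(G,t-1)$ (by deletion of the first letter). Hence a geodesic of $\langle V_x\rangle$ between $xw$ and $xw'$ is a path of $S(G,t)$, which already gives $d_{S(G,t)}(xw,xw')\le d_{\langle V_x\rangle}(xw,xw')=d_{S(G,t-1)}(w,w')$; here $d_{S(G,t-1)}(w,w')$ is the quantity intended in the statement (for $t=2$ it is literally $d_G(w,w')$, since $S(G,1)=G$). All the work is in the reverse inequality, i.e.\ in showing that a shortest $xw$--$xw'$ path never needs to leave $\langle V_x\rangle$.

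First I would record two structural facts, writing $p^{t-1}$ for the word $pp\cdots p$. Every edge of $S(G,t)$ whose endpoints have \emph{different} first letters $x\ne y$ is the unique connecting edge $\{xy^{t-1},yx^{t-1}\}$ attached to an edge $\{x,y\}$ of $G$; in particular there is at most one edge between two copies $\langle V_x\rangle$ and $\langle V_y\rangle$, and it meets them at the vertices $xy^{t-1}$ and $yx^{t-1}$, which correspond to extreme vertices of the copies (namely $y^{t-1}$ and $x^{t-1}$ under the isomorphism with $S(G,t-1)$). Consequently, along any walk the maximal subwalks contained in a single copy are separated by connecting edges, and a walk can enter or leave a copy only through these extreme vertices. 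I would also use that the function $D(p,q):=d_{S(G,t-1)}(p^{t-1},q^{t-1})$ is the restriction of the metric of $S(G,t-1)$ to its extreme vertices, so $D$ is symmetric, nonnegative, and obeys the triangle inequality.

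Now suppose, for contradiction, that a shortest $xw$--$xw'$ path $P$ leaves $\langle V_x\rangle$, and look at its first excursion: $P$ reaches the extreme vertex $xa^{t-1}$ of $\langle V_x\rangle$ (with $a\sim x$), crosses to $ax^{t-1}$, stays outside $\langle V_x\rangle$ while visiting copies indexed by a walk $x=c_0,c_1,\dots,c_k,c_{k+1}=x$ of $G$ with $c_1=a$ and $c_k=b$, and finally re-enters at the extreme vertex $xb^{t-1}$. The part of $P$ inside the copy $\langle V_{c_i}\rangle$ joins its extreme vertex for $c_{i-1}$ to its extreme vertex for $c_{i+1}$, so it has length at least $D(c_{i-1},c_{i+1})$; counting also the $k+1$ connecting edges, the length $L$ of the excursion satisfies $L\ge (k+1)+\sum_{i=1}^{k}D(c_{i-1},c_{i+1})$. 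The decisive estimate is $\sum_{i=1}^{k}D(c_{i-1},c_{i+1})\ge D(a,b)$, which I would obtain from the triangle inequality for $D$ by telescoping along alternate indices: when $k$ is odd the even-indexed terms already chain $c_1$ to $c_k$, and when $k$ is even the odd- and even-indexed terms chain $x$ to $b$ and $a$ to $x$, whence the bound follows from $D(a,x)+D(x,b)\ge D(a,b)$. Since $xa^{t-1}$ and $xb^{t-1}$ lie at distance exactly $D(a,b)$ inside $\langle V_x\rangle$, replacing the excursion by a geodesic of $\langle V_x\rangle$ turns $P$ into an $xw$--$xw'$ walk of length $|P|-L+D(a,b)\le |P|-(k+1)\le|P|-2<|P|$, contradicting the minimality of $P$. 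Therefore $P\subseteq\langle V_x\rangle$, so $d_{S(G,t)}(xw,xw')\ge d_{\langle V_x\rangle}(xw,xw')=d_{S(G,t-1)}(w,w')$, and the two inequalities give equality.

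The hard part is precisely this reverse inequality, and inside it the only genuine subtlety is the parity-aware telescoping that yields $\sum_i D(c_{i-1},c_{i+1})\ge D(a,b)$; once the structural facts (at most one edge between copies, attached at extreme vertices) and the triangle inequality for $D$ are in hand, the contradiction is immediate, and no induction on $t$ is required.
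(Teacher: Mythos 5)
Your proof is correct and follows essentially the same route as the paper's: both arguments decompose a putative shortest path by the copies $\langle V_{c}\rangle$ it visits, observe that copies are joined only by single edges attached at extreme vertices, and apply the parity-split telescoping of the triangle inequality for $d_{S(G,t-1)}\left(p^{t-1},q^{t-1}\right)$ to reach a contradiction. The only (cosmetic) difference is that you perform a local surgery on the first excursion and contradict minimality directly, whereas the paper computes the total length of the whole path and contradicts the upper bound $d_{S(G,t)}(xw,xw')\le d_{S(G,t-1)}(w,w')$.
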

\begin{proof}
 For any shortest path $w,w_1,w_2,\dots, w_l,w'$ between $w$ and $w'$ in $S(G,t-1)$ and $x\in V$, we have a path  $xw,xw_1,xw_2,\dots, xw_l,xw'$ between $xw$ and $xw'$ in $S(G,t)$. Hence,  we can conclude that 
\begin{equation}\label{EqForContradiction}
 d_{S(G,t)}(xw, xw')\le d_{S(G,t-1)}(w, w').
\end{equation} 
Suppose that there exists a shortest path $P$  between $xw$ and $xw'$ of the form 
$$xw=v_0w_0^{(0)},v_0w_1^{(0)},\ldots,v_0w_{l_0}^{(0)},v_1w_0^{(1)},v_1w_1^{(1)},\ldots,v_1w_{l_1}^{(1)},\ldots,v_0w_{l_r}^{(r)}=xw'.$$ 
In such a case, $v_0w_0^{(r)}=v_0(v_{r-1})^{t-1}$, $v_{r-1}w_{l_{r-1}}^{(r-1)}=v_{r-1}(v_{0})^{t-1}$, $v_{i+1}w_0^{(i+1)}=v_{i+1}(v_i)^{t-1}$ and $v_iw_{l_{i}}^{(i)}=v_{i}(v_{i+1})^{t-1}$ for all $i\in\{0,1,\ldots,r-2\}$. Also,  the trail $x=v_0,v_1,v_2,\ldots,$ $v_{r-1},v_0=x$ associated to $P$ has length greater than zero. Since $P$ is a shortest path, we obtain that $v_iw_0^{(i)},v_iw_1^{(i)},\ldots,v_iw_{l_i}^{(i)}$ is a shortest path in $\langle V_{v_i} \rangle$, where $i\in \{0,\dots    , r\}$ and $v_r=v_0$,
 so that 
\begin{description}
\item $d_{S(G,t)}\left(v_0w,v_0(v_1)^{t-1}\right)=d_{S(G,t-1)}\left(w,(v_1)^{t-1}\right)$,
\item $d_{S(G,t)}\left(v_0(v_{r-1})^{t-1},v_0w'\right)=d_{S(G,t-1)}\left((v_{r-1})^{t-1},w'\right)$,
\item $d_{S(G,t)}\left(v_{r-1}(v_{r-2})^{t-1},v_{r-1}(v_0)^{t-1}\right)=d_{S(G,t-1)}\left((v_{r-2})^{t-1},(v_0)^{t-1}\right)$ and
\item $d_{S(G,t)}\left(v_{i+1}(v_i)^{t-1},v_{i+1}(v_{i+2})^{t-1}\right)=d_{S(G,t-1)}\left((v_i)^{t-1},(v_{i+2})^{t-1}\right)$,  $i\in\{0,1,\ldots,r-3\}$.
\end{description}
 Thus, 
\begin{align*}
d_{S(G,t)}(xw,xw')=&d_{S(G,t)}(v_0w,v_0(v_1)^{t-1})+\sum_{i=0}^{r-3}d_{S(G,t)}(v_{i+1}(v_i)^{t-1},v_{i+1}(v_{i+2})^{t-1})+\\
&+d_{S(G,t)}(v_{r-1}(v_{r-2})^{t-1},v_{r-1}(v_0)^{t-1})+d_{S(G,t)}(v_0(v_{r-1})^{t-1},v_0w')+r\\
=&d_{S(G,t-1)}(w,(v_1)^{t-1})+\sum_{i=0}^{r-3}d_{S(G,t-1)}((v_i)^{t-1},(v_{i+2})^{t-1})+\\
&+d_{S(G,t-1)}((v_{r-2})^{t-1},(v_0)^{t-1})+d_{S(G,t-1)}((v_{r-1})^{t-1},w')+r.
\end{align*}
Hence, if $r$ is even, then
\begin{align*}
d_{S(G,t)}(xw,xw')>&d_{S(G,t-1)}(w,(v_1)^{t-1})+\sum_{i=0}^{\frac{r-4}{2}}d_{S(G,t-1)}((v_{2i+1})^{t-1},(v_{2i+3})^{t-1})\\
&+d_{S(G,t-1)}((v_{r-1})^{t-1},w')\\
\ge& d_{S(G,t-1)}(w,w') \text{ (by triangle inequality)},
\end{align*}
which contradicts \eqref{EqForContradiction}. Now, if $r$ is odd, then 
\begin{align*}
d_{S(G,t)}(xw,xw')>&d_{S(G,t-1)}(w,(v_1)^{t-1})+\sum_{i=0}^{\frac{r-5}{2}}d_{S(G,t-1)}((v_{2i+1})^{t-1},(v_{2i+3})^{t-1})\\
&+d_{S(G,t-1)}((v_{r-2})^{t-1},(v_0)^{t-1})
\sum_{i=0}^{\frac{r-3}{2}}d_{S(G,t-1)}((v_{2i})^{t-1},(v_{2i+2})^{t-1})\\
&+d_{S(G,t-1)}((v_{r-1})^{t-1},w')\\
\ge & d_{S(G,t-1)}(w,w') \text{ (by triangle inequality)},
\end{align*}
which contradicts \eqref{EqForContradiction}. Therefore, the result follows.
\end{proof}
From the lemma above, we deduce the following remark.

\begin{remark}
Let $G=(V,E)$ be a connected non-trivial graph and let $r\ge 1$ and $t\ge 1$ be two integers. 
If
\begin{equation}
v_0w_0^{(0)},v_0w_1^{(0)},\ldots,v_0w_{l_0}^{(0)},v_1w_0^{(1)},v_1w_1^{(1)},\ldots,v_1w_{l_1}^{(1)},\ldots,v_rw_0^{(r)},v_rw_1^{(r)},\ldots,v_rw_{l_r}^{(r)},\tag{$\ast$}\label{path}
\end{equation}   
is a shortest path 
in $S(G,t)$, where $v_i\in V$ and $w_{j}^{(i)}\in V^{t-1}$, for  
 $i\in \{0,1,\ldots,r\}$ and $j\in\{0,1,\ldots,l_i\}$,  
 then  $v_0,v_1,\ldots,v_r$ is a path in $G$.
\end{remark}

From now on, we will refer to the path $v_0,v_1,\ldots,v_r$ in $G$ as the $G$-path associated to the shortest path (\ref{path}).  We say that a $G$-path $P$ is \emph{triangle-free} if for any set $S$ composed by three consecutive vertices of $P$, the subgraph of $G$ induced by $S$ is a path. For instance, the $G$-path associated to the shortest path $121,122,211,214,241,244,422,423,432,433,344$ shown in Figure \ref{FigureExDistance} is $1,2,4,3$. Notice that this $G$-path  is not triangle-free. 

\begin{lemma}\label{lemmaDGExtremeVerticesFreeTriangle}
Let $G=(V,E)$ be a connected non-trivial graph and let $t\ge 1$ be an integer such that $d_{S(G,t)}(u^t, v^t)=(2^t-1)d_G(u,v)$ for all  $u,v\in V$. 
Then for every $x,y\in V$ and $w,w'\in V^t$ the following assertions hold. 

\begin{enumerate}[{\rm (i)}]
\item The $G$-path associated to any shortest path between $x^{t+1}$ and $yw$ is triangle-free.

\item Let  $w\ne x^{t-1}$,  $w'\ne y^{t-1}$. If $d_G(x,y)\ge 2$, then  the $G$-path $x,v_1,v_2,\ldots,$ $v_{r-1},y$ associated to any shortest path between $xw$ and $yw'$ is triangle-free whenever $x\not\in N(v_2)$.
\end{enumerate}
\end{lemma}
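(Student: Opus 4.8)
The plan is to argue by a local exchange, in both parts viewing $S(G,t+1)$ as $n$ copies $\langle V_v\rangle\cong S(G,t)$ ($v\in V$), where two copies $\langle V_a\rangle,\langle V_b\rangle$ with $\{a,b\}\in E$ are joined by the unique edge $\{ab^t,ba^t\}$. Fix a shortest path $P$ whose $G$-path is $x=v_0,v_1,\dots,v_r=y$; by the Remark this is a simple path in $G$, so each interior copy $\langle V_{v_i}\rangle$ is entered at $v_i(v_{i-1})^t$ and left at $v_i(v_{i+1})^t$. Since a subpath of a shortest path is shortest, Lemma~\ref{lemmaDGInside} and the hypothesis $d_{S(G,t)}(u^t,v^t)=(2^t-1)d_G(u,v)$ give that the portion of $P$ inside $\langle V_{v_i}\rangle$ has length $(2^t-1)d_G(v_{i-1},v_{i+1})$. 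Thus $P$ decomposes into $r$ crossing edges plus these internal pieces, with the two end copies carrying boundary terms determined by the actual endpoints.

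Suppose the $G$-path is not triangle-free and let $i$ be the least index with $v_{i-1}\sim v_{i+1}$. I would delete $v_i$ and build a walk $P'$ between the same endpoints that crosses directly from $\langle V_{v_{i-1}}\rangle$ to $\langle V_{v_{i+1}}\rangle$ along $\{v_{i-1}(v_{i+1})^t,v_{i+1}(v_{i-1})^t\}$, leaving $P$ untouched apart from re-routing the entry/exit suffixes in the two affected copies. In the interior case ($2\le i\le r-2$) the internal-length formula yields $\ell(P)-\ell(P')=(2^t-1)\bigl[d_G(v_{i-2},v_i)+1+d_G(v_i,v_{i+2})-d_G(v_{i-2},v_{i+1})-d_G(v_{i-1},v_{i+2})\bigr]+1$, and I would bound the bracket below by $0$ using the triangle inequality in $G$ together with the edge $v_{i-1}v_{i+1}$ and the crucial equality $d_G(v_{i-2},v_i)=2$. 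This equality is exactly what minimality of $i$ provides: no triangle at position $i-1$ forces $v_{i-2}\not\sim v_i$. Hence $\ell(P')<\ell(P)$, contradicting that $P$ is shortest.

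The end copies require parallel but slightly different bookkeeping. When the triangle abuts a generic endpoint (the first or last copy, reaching $w$ or $w'$), the relevant internal term is not of the form $(2^t-1)d_G$, so there I would instead invoke the triangle inequality in $S(G,t)$, e.g. $d_{S(G,t)}((v_{i-1})^t,w')-d_{S(G,t)}((v_i)^t,w')\ge-(2^t-1)$, which again produces a strictly positive difference once $d_G(v_{i-2},v_i)=2$ is available. For statement (i) the extreme endpoint $x^{t+1}$ makes the $i=1$ triangle especially clean: there $d_G(v_0,v_2)=1$ outright, and excising $v_1$ shortens $P$ with no appeal to minimality. For statement (ii) the start $xw$ is generic, so this shortcut is unavailable; the hypothesis $x\notin N(v_2)$ is precisely what both excludes a triangle at position $1$ and supplies $d_G(v_0,v_2)=2$ needed for the $i=2$ exchange, after which the smallest triangle has $i\ge2$ and the interior argument applies.

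The hard part will be controlling the cascade: deleting $v_i$ shifts the entry and exit suffixes of the neighbouring copies, so the naive comparison that keeps all endpoints fixed actually fails (it lengthens the path for every $t\ge1$). The fix is to always remove the center of the smallest-index triangle, so that the preceding triple is a genuine path and $d_G(v_{i-2},v_i)=2$ holds; this is exactly the quantity that offsets the $(2^t-1)$ ``saved'' by skipping the middle copy. Beyond that, the remaining effort is the routine but fiddly enumeration of the boundary configurations ($i=1$, $i=r-1$, and the small cases $r=2,3$), all of which reduce to the single inequality above.
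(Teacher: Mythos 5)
Your proposal is correct and takes essentially the same route as the paper: locate the first triangle $v_{i-1},v_i,v_{i+1}$ on the associated $G$-path, bypass the copy $\langle V_{v_i}\rangle$ via the direct edge joining $\langle V_{v_{i-1}}\rangle$ and $\langle V_{v_{i+1}}\rangle$, evaluate every piece with Lemma \ref{lemmaDGInside} and the hypothesis $d_{S(G,t)}(u^t,v^t)=(2^t-1)d_G(u,v)$, and use minimality of $i$ to secure $d_G(v_{i-2},v_i)=2$; the paper's cases (a), (b), (c) are exactly your $i=1$, interior, and $i=r-1$ configurations, and part (ii) is handled identically in both. The only cosmetic slip is the direction of your displayed end-copy inequality (what is needed is $d_{S(G,t)}((v_{i-1})^t,w')\le d_{S(G,t)}((v_i)^t,w')+(2^t-1)$), but it follows from the same triangle inequality you invoke.
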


\begin{proof}
Let $P$ be a shortest path between $x^{t+1}$ and $yw$ and let $x=v_0,v_1,v_2,\ldots,$ $v_{r-1},v_r=y$ be the $G$-path $P_1$ associated to $P$. If $P_1$ is triangle-free, then we are done. Now, suppose that $j$ is the minimum subscript  such that $v_j$ and $ v_{j+2}$ are adjacent and   consider the following cases:
\begin{enumerate}[(a)]
\item $j=0$. Suppose that $r=2$. In this case, the $G$-path is $x=v_0,v_1,v_2=y$,  so that 
\begin{align*}
d_{S(G,t+1)}(x^{t+1},yw)=&d_{S(G,t+1)}((v_0)^{t+1},v_0(v_1)^t)+d_{S(G,t+1)}(v_{1}(v_0)^t,v_{1}(v_{2})^t)\\
&+d_{S(G,t+1)}(v_2(v_{1})^t,v_2w)+2\\
=&d_{S(G,t)}((v_0)^t,(v_1)^t)+d_{S(G,t)}((v_0)^t,(v_2)^t)\\
&+d_{S(G,t)}((v_{1})^t,w)+2\text{ (by Lemma \ref{lemmaDGInside})}\\
\ge &d_{S(G,t)}((v_0)^t,w)+d_{S(G,t)}((v_0)^t,(v_{2})^t)+2\\
&\text{(by triangle inequality)}\\
> &d_{S(G,t)}((v_0)^t,w)+d_{S(G,t)}((v_0)^t,(v_{2})^t)+1\\
= &d_{S(G,t+1)}((v_0)^{t+1},v_0(v_2)^t)+1+d_{S(G,t+1)}(v_2(v_0)^t,v_2w)\\
&\text{ (by Lemma \ref{lemmaDGInside})}\\
\ge & d_{S(G,t+1)}(x^{t+1},yw) \text{ (by triangle inequality)},
\end{align*}
which is a contradiction.

Now, assume that $r\ge 3$ and let $$\displaystyle \alpha_j=\sum_{i=2}^{r-2} d_{S(G,t+1)}(v_{i+1}v_i^t,v_{i+1}v_{i+2}^t)+d_{S(G,t+1)}(v_r(v_{r-1})^t,v_rw)+r-2.$$ So,
\begin{align*}
d_{S(G,t+1)}(x^{t+1},yw)=&d_{S(G,t+1)}((v_0)^{t+1},v_0(v_1)^t)+d_{S(G,t+1)}(v_{1}(v_0)^t,v_{1}(v_{2})^t)\\
&+d_{S(G,t+1)}(v_2(v_{1})^t,v_2(v_3)^{t})+\alpha_j+2\\
=&d_{S(G,t)}((v_0)^t,(v_1)^t)+d_{S(G,t)}((v_0)^t,(v_{2})^t)\\
&+d_{S(G,t)}((v_{1})^t,(v_3)^{t})+\alpha_j+2\text{ (by Lemma \ref{lemmaDGInside})}\\
\ge& d_{S(G,t)}((v_0)^t,(v_3)^t)+d_{S(G,t)}((v_0)^t,(v_2)^t)+\alpha_j+2\\
&\text{ (by triangle inequality)}\\
=& d_{S(G,t+1)}((v_0)^{t+1},v_0(v_2)^t)+d_{S(G,t+1)}(v_2(v_0)^t,v_2(v_3)^t)\\
&+\alpha_j+2 \text{ (by Lemma \ref{lemmaDGInside})}\\
>& d_{S(G,t+1)}((v_0)^{t+1},v_0(v_2)^t)+1+d_{S(G,t+1)}(v_2(v_0)^t,v_2(v_3)^t)\\
&+\alpha_j
\\
\ge & d_{S(G,t+1)}(x^{t+1},yw) \text{ (by triangle inequality)},
\end{align*}
which is a contradiction.  

\item $1\le j\le r-3$. Let $\displaystyle \alpha_j=d_{S(G,t+1)}((v_0)^{t+1},v_0(v_1)^t)+\sum_{i=0}^{j-2} d_{S(G,t+1)}(v_{i+1}v_i^t,v_{i+1}(v_{i+2})^t)+\sum_{i=j+2}^{r-2} d_{S(G,t+1)}(v_{i+1}v_i^t,v_{i+1}(v_{i+2})^t)+d_{S(G,t+1)}(v_r(v_{r-1})^t,v_rw)+r-2$. Notice that $d_G(v_{j-1},v_{j+1})=2,d_G(v_{j},v_{j+2})=1$ and $ d_G(v_{j+1},v_{j+3}),d_G(v_{j-1},v_{j+2}), d_G(v_{j},v_{j+3})\in \{1,2\}$. Hence, 
\begin{align*}
d_{S(G,t+1)}(x^{t+1},yw)=&\alpha_j+d_{S(G,t+1)}(v_{j}(v_{j-1})^t,v_{j}(v_{j+1})^t)\\
&+d_{S(G,t+1)}(v_{j+1}(v_{j})^t,v_{j+1}(v_{j+2})^t)\\
&+d_{S(G,t+1)}(v_{j+2}(v_{j+1})^t,v_{j+2}(v_{j+3})^t)+2\\
=&\alpha_j+d_{S(G,t)}((v_{j-1})^t,(v_{j+1})^t)+d_{S(G,t)}((v_{j})^t,(v_{j+2})^t)\\
&+d_{S(G,t)}((v_{j+1})^t,(v_{j+3})^t)+2\text{ (by Lemma \ref{lemmaDGInside})}\\
\ge& \alpha_j+2(2^t-1)+(2^t-1)+(2^t-1)+2 \text{ (by assumption)}\\
\ge &\alpha_j+d_{S(G,t)}((v_{j-1})^t,(v_{j+2})^t)+d_{S(G,t)}((v_{j})^t,(v_{j+3})^t)+2\\
& \text{ (by assumption)}\\
=&\alpha_j+d_{S(G,t+1)}(v_j(v_{j-1})^t,v_j(v_{j+2})^t)\\
&+d_{S(G,t+1)}(v_{j+2}(v_{j})^t,v_{j+2}(v_{j+3})^t)+2\text{ (by Lemma \ref{lemmaDGInside})}\\
>&\alpha_j+d_{S(G,t+1)}(v_j(v_{j-1})^t,v_j(v_{j+2})^t)+1\\
&+d_{S(G,t+1)}(v_{j+2}(v_{j})^t,v_{j+2}(v_{j+3})^t)
\\
\ge & d_{S(G,t+1)}(x^{t+1},yw) 
\text{ (by triangle inequality)},
\end{align*}
which is a contradiction.   
\item $j=r-2$ and $r\ge 3$. Let  $$\alpha_j=d_{S(G,t+1)}((v_0)^{t+1},v_0(v_1)^t)+\sum_{i=0}^{r-4} d_{S(G,t+1)}(v_{i+1}(v_i)^t,v_{i+1}(v_{i+2})^t)+r-2.$$ Notice that $d_G(v_{r-3},v_{r-1})=2,d_G(v_{r-2},v_{r-1})=d_G(v_{r-2},v_{r})=1$ and $ d_G(v_{r-3},v_{r})\in \{1,2\}$. In this case,
\begin{align*}
d_{S(G,t+1)}(x^{t+1},yw)=&\alpha_j+d_{S(G,t+1)}(v_{r-2}(v_{r-3})^t,v_{r-2}(v_{r-1})^t)\\
&+d_{S(G,t+1)}(v_{r-1}(v_{r-2})^t,v_{r-1}(v_{r})^t)\\
&+d_{S(G,t+1)}(v_r(v_{r-1})^t,v_rw)+2\\
=&\alpha_j+d_{S(G,t)}((v_{r-3})^t,(v_{r-1})^t)+d_{S(G,t)}((v_{r-2})^t,(v_{r})^t)\\
&+d_{S(G,t)}((v_{r-1})^t,w)+2 \text{  (by Lemma \ref{lemmaDGInside})}\\
= &\alpha_j+2(2^t-1)+(2^t-1)+d_{S(G,t)}((v_{r-1})^t,w)+2\\
& \text{ (by assumption)}\\
\ge &\alpha_j+d_{S(G,t)}((v_{r-3})^t,(v_{r})^t)+d_{S(G,t)}((v_{r-2})^t,(v_{r-1})^t)\\
&+d_{S(G,t)}((v_{r-1})^t,w)+2 \text{ (by assumption)}\\
> &\alpha_j+d_{S(G,t)}((v_{r-3})^t,(v_{r})^t)+d_{S(G,t)}((v_{r-2})^t,(v_{r-1})^t)\\
&+d_{S(G,t)}((v_{r-1})^t,w)+1\\
\ge &\alpha_j+d_{S(G,t)}((v_{r-3})^t,(v_{r})^t)+d_{S(G,t)}((v_{r-2})^t,w)+1\\
&\text{(by triangle inequality)}\\
= &\alpha_j+d_{S(G,t+1)}(v_{r-2}(v_{r-3})^t,v_{r-2}(v_{r})^t)+1\\
&+d_{S(G,t+1)}(v_r(v_{r-2})^t,v_rw)\text{ (by Lemma \ref{lemmaDGInside})}
\\
\ge & d_{S(G,t+1)}(x^{t+1},yw) \text{ (by triangle inequality)},
\end{align*}
which is a contradiction.
\end{enumerate}
Considering the previous cases, we deduce that the $G$-path associated to any shortest path between $x^{t+1}$ and $yw$ is triangle-free. Therefore, (i) holds. 

Now, assume that $d_G(x,y)\ge 2$. 
If $x\not\in N(v_2)$ and $x=v_0,v_1,v_2,\ldots,$ $v_{r-1},v_r=y$ is the $G$-path associated to a  shortest path between $xw$ and $yw'$, then by analogy to the proof of (i), cases (b) and (c), we deduce that the above mentioned $G$-path is triangle-free. Therefore, (ii) holds.
\end{proof}

\begin{theorem}\label{theoExtremeVertices}
Let $G=(V,E)$ be a connected non-trivial graph. For any $x,y\in V$ and any integer $t\ge 1$,  $$d_{S(G,t)}(x^t,y^t) = (2^t-1)d_G(x,y).$$
\end{theorem}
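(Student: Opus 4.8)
The plan is to argue by induction on $t$. The base case $t=1$ is immediate, since $S(G,1)=G$ and $(2^1-1)d_G(x,y)=d_G(x,y)$. For the inductive step I would assume that $d_{S(G,t)}(u^t,v^t)=(2^t-1)d_G(u,v)$ holds for all $u,v\in V$ and establish the corresponding identity in $S(G,t+1)$. The crucial observation is that this inductive hypothesis is precisely the standing assumption of Lemma \ref{lemmaDGExtremeVerticesFreeTriangle}, so that lemma becomes available at exponent $t$; in particular, taking $w=y^t$ in part (i) shows that the $G$-path associated to any shortest path between the extreme vertices $x^{t+1}$ and $y^{t+1}=y\,y^t$ is triangle-free.

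For the upper bound I would fix a geodesic $x=z_0,z_1,\ldots,z_k=y$ in $G$, where $k=d_G(x,y)$, and lift it to a walk in $S(G,t+1)$ that stays inside the copy $\langle V_{z_i}\rangle$ while travelling from its entry port to its exit port and then crosses the connecting edge $\{z_i z_{i+1}^t, z_{i+1} z_i^t\}$ to the next copy. Inside the internal copy $z_i$ the walk runs between $z_i z_{i-1}^t$ and $z_i z_{i+1}^t$, whose distance equals $d_{S(G,t)}(z_{i-1}^t,z_{i+1}^t)=(2^t-1)d_G(z_{i-1},z_{i+1})$ by Lemma \ref{lemmaDGInside} and the inductive hypothesis. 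Since the $z_i$ lie on a geodesic, $d_G(z_{i-1},z_{i+1})=2$ for the internal copies and $d_G(z_{i-1},z_i)=1$ for the two terminal copies; adding the $k$ unit-length crossing edges and telescoping yields total length $(2^t-1)\cdot 2k+k=(2^{t+1}-1)k$, so $d_{S(G,t+1)}(x^{t+1},y^{t+1})\le (2^{t+1}-1)d_G(x,y)$.

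For the lower bound I would take an actual shortest path $P$ between $x^{t+1}$ and $y^{t+1}$ and decompose it along its $G$-path $x=v_0,v_1,\ldots,v_r=y$, which by the Remark is a genuine path in $G$, whence $r\ge d_G(x,y)$ and $P$ visits each copy in one contiguous block. Because $P$ is shortest, its portion inside each copy $\langle V_{v_i}\rangle$ is itself a shortest path between the two relevant ports, contributing $(2^t-1)d_G(v_{i-1},v_{i+1})$ for an internal copy (by Lemma \ref{lemmaDGInside} and the inductive hypothesis) and $(2^t-1)$ for each of the two terminal copies. Here is where triangle-freeness is essential: for three consecutive vertices $v_{i-1},v_i,v_{i+1}$ it forces $v_{i-1}\not\sim v_{i+1}$, hence $d_G(v_{i-1},v_{i+1})=2$ on every internal copy, and the same telescoping gives $\text{length}(P)=(2^{t+1}-1)r\ge (2^{t+1}-1)d_G(x,y)$. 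Combining the two bounds closes the induction (and forces $r=d_G(x,y)$).

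The step I expect to be the main obstacle is the lower bound, specifically excluding ``corner-cutting'' shortest paths whose $G$-path passes through a triangle $v_{i-1}v_iv_{i+1}$: such a path would charge only $(2^t-1)$ rather than $2(2^t-1)$ inside copy $v_i$ and could a priori be shorter than $(2^{t+1}-1)d_G(x,y)$. This is exactly the scenario ruled out by Lemma \ref{lemmaDGExtremeVerticesFreeTriangle}(i), so the genuine content of the argument is already packaged in that lemma; once triangle-freeness is in hand, what remains is the routine port-to-port decomposition and the telescoping arithmetic.
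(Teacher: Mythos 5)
Your proposal is correct and follows essentially the same route as the paper: induction on $t$, with the inductive hypothesis activating Lemma \ref{lemmaDGExtremeVerticesFreeTriangle}(i) (applied with $w=y^t$) to force triangle-freeness of the associated $G$-path, the port-to-port decomposition via Lemma \ref{lemmaDGInside} for the lower bound, and the lifted geodesic for the upper bound. The telescoping arithmetic matches the paper's computation exactly, so there is nothing to add.
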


\begin{proof}
We will proceed by induction on $t$.  For $t = 1$, we have that $d_{S(G,1)}(x^1, y^1)=d_G(x,y)=(2^1-1)d_G(x,y)$. Suppose that $d_{S(G,t)}(x^t, y^t) = (2^t-1)d_G(x,y)$ holds true for an integer $t\ge 1$ and any pair of vertices of $G$. We will  show that $d_{S(G,t+1)}(x^{t+1}, y^{t+1}) = (2^{t+1}-1)d_G(x,y)$.

Let $P$ be a  shortest path between $x^{t+1},y^{t+1}$ and let $x=v_0,v_1,v_2,\ldots, v_{r-1},v_r=y$ be the $G$-path associated to $P$. So, 
\begin{align*}
d_{S(G,t+1)}(x^{t+1},y^{t+1}) =&d_{S(G,t+1)}(v_0^{t+1},v_0v_1^t)+\sum_{i=0}^{r-2} d_{S(G,t+1)}(v_{i+1}v_i^t,v_{i+1}v_{i+2}^t) +\\
&+d_{S(G,t+1)}(v_rv_{r-1}^t,v_r^{t+1})+r. 
\end{align*}
By  hypothesis and Lemmas \ref{lemmaDGInside} and \ref{lemmaDGExtremeVerticesFreeTriangle}, we obtain
$$d_{S(G,t+1)}(x^{t+1},y^{t+1}) = 2^t-1+2(2^t-1)(r-1)+2^t-1+r.
$$
Also, since $ r\ge d_G(x,y)$  we have $$d_{S(G,t+1)}(x^{t+1},y^{t+1})\ge (2^{t+1}-1)d_G(x,y).$$

Now, let $x=u_0,u_1,\dots,u_s=y$ be a shortest path between $x$ and $y$. By Lemma \ref{lemmaDGInside} and induction hypothesis we have that 
\begin{description}
\item $d_{S(G,t+1)}(x^{t+1}, x(u_1)^t)=2^t-1$, $d_{S(G,t+1)}(y(u_{s-1})^t, y^{t+1})=2^t-1$ and
\item $d_{S(G,t+1)}(u_{i+1}(u_i)^t, u_{i+1}(u_{i+2})^t)=2(2^t-1)$ for $i\in\{0,1,\ldots,s-2\}$.
\end{description}
Thus, since $u_i(u_{i+1})^{t}$ is adjacent to $u_{i+1}(u_i)^{t}$ for all $i\in\{0,1,\ldots,s-1\}$, we have 
\begin{align*}
d_{S(G,t+1)}(x^{t+1},y^{t+1})\le &d_{S(G,t+1)}(x^{t+1},x(u_1)^t)+\sum_{i=0}^{s-2} d_{S(G,t+1)}(u_{i+1}(u_i)^t,u_{i+1}(u_{i+2})^t)+\\
&+d_{S(G,t+1)}(y(u_{s-1})^t,y^{t+1})+s\\
=&2^t-1+\sum_{i=0}^{s-2} 2\left(2^t-1\right)+2^t-1+s\\
=&2^t-1+2\left(2^t-1\right)(d_G(x,y)-1)+2^t-1+d_G(x,y)\\
=& (2^{t+1}-1)d_G(x,y).
\end{align*}
Therefore, $d_{S(G,t+1)}(x^{t+1},y^{t+1}) =  (2^{t+1}-1)d_G(x,y).$
\end{proof}

For any $w\in V^{t-1}$, the subgraph induced by $\{wx:\, x\in V\}$ is isomorphic to $G$ and so $d_{S(G,t)}(x^t,x^{t-1}y)=d_G(x,y)$. Hence, as we will see in Theorem  \ref{remExtremeArbitraryVertices}, we only study  $d_{S(G,t)}(x^t,w)$ for the cases in which $w$ is not of the form $ x^{t-1}y$.

Given two vertices $x,y\in V$, we define $\mathcal{P}(x,y)$ as the set of all shortest paths between $x$ and $y$. For any $P_i\in \mathcal{P}(x,y)$, the neighbour of $y$ lying on $P_i$ will be denoted by $y^{(i)}$. With this notation in mind we can state the following result.

\begin{theorem}\label{remExtremeArbitraryVertices}
Let $G=(V,E)$ be a connected non-trivial graph. For any integer $t\ge 2$ and any $x\in V$ and $w=x^{j-1}z_jz_{j+1}\cdots z_t\in V^t$ such that $1\le j\le t-1$ and $x\ne z_j$, 
\begin{align*}
d_{S(G,t)}(x^t,w)=&\min_{P_i\in\mathcal{P}(x,z_j)}\left\{d_{S(G,t-j)}\left(\left(z_j^{(i)}\right)^{t-j},z_{j+1}\cdots z_t\right)\right\}+\\
&+(2^{t-j+1}-1)d_G(x,z_j)-(2^{t-j}-1).
\end{align*}
\end{theorem}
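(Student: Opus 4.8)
The plan is to argue by induction on $j$, disposing of the base case $j=1$ directly and reducing every case $j\ge 2$ to it through Lemma \ref{lemmaDGInside}. For the reduction, note that when $j\ge 2$ both vertices begin with the letter $x$, since $x^t=x\cdot x^{t-1}$ and $w=x\cdot(x^{j-2}z_j\cdots z_t)$; hence Lemma \ref{lemmaDGInside} gives $d_{S(G,t)}(x^t,w)=d_{S(G,t-1)}(x^{t-1},x^{j-2}z_j\cdots z_t)$. The second argument is a word in $V^{t-1}$ of the form $x^{(j-1)-1}z_j\cdots z_t$ whose first letter differing from $x$ sits in position $j-1$, with $1\le j-1\le (t-1)-1$. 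Applying the formula for the pair $(t-1,j-1)$ and using $(t-1)-(j-1)=t-j$ reproduces the claimed expression verbatim, so iterating the reduction lands us in the case $j=1$ with $t'=t-j+1\ge 2$, which is all that remains.

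For $j=1$ we have $w=z_1z_2\cdots z_t$ with $z_1\ne x$, so $x^t$ and $w$ lie in distinct copies $\langle V_x\rangle$ and $\langle V_{z_1}\rangle$. I would first obtain the upper bound constructively. Given $P_i\in\mathcal P(x,z_1)$, write it as $x=u_0,u_1,\dots,u_s=z_1$ with $s=d_G(x,z_1)$ and $u_{s-1}=z_1^{(i)}$, and concatenate the within-copy geodesics from $x^t$ to $x u_1^{t-1}$, from $u_{k+1}u_k^{t-1}$ to $u_{k+1}u_{k+2}^{t-1}$ through the intermediate copies, and from $z_1 u_{s-1}^{t-1}$ to $z_1(z_2\cdots z_t)$, together with the $s$ connecting edges. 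Rewriting each segment via Lemma \ref{lemmaDGInside} and evaluating the extreme-to-extreme pieces by Theorem \ref{theoExtremeVertices} (each adjacent pair contributing $2^{t-1}-1$ and each of the $s-1$ intermediate copies $2(2^{t-1}-1)$), the terms collect to $(2^t-1)d_G(x,z_1)-(2^{t-1}-1)+d_{S(G,t-1)}((z_1^{(i)})^{t-1},z_2\cdots z_t)$; minimizing over $P_i$ gives ``$\le$''.

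For the lower bound I would take a shortest $x^t$--$w$ path $P$ with associated $G$-path $x=v_0,v_1,\dots,v_r=z_1$. Because $x^t$ is an extreme vertex, Lemma \ref{lemmaDGExtremeVerticesFreeTriangle}(i) applied with parameter $t-1$ (its hypothesis being exactly Theorem \ref{theoExtremeVertices}) forces this $G$-path to be triangle-free, so $d_G(v_{k-1},v_{k+1})=2$ for every interior $k$. Decomposing $P$ over the copies it visits and evaluating each piece with Lemma \ref{lemmaDGInside} and Theorem \ref{theoExtremeVertices} then yields
\[ d_{S(G,t)}(x^t,w)=(2^t-1)\,r-(2^{t-1}-1)+d_{S(G,t-1)}\!\left(v_{r-1}^{t-1},z_2\cdots z_t\right), \]
with $r\ge d_G(x,z_1)$.

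The step I expect to be the crux is showing this quantity is at least the claimed minimum, since $v_{r-1}$ need not be the penultimate vertex of any shortest $x$--$z_1$ path. I would split into two cases. If $v_{r-1}$ does lie on some $P_i\in\mathcal P(x,z_1)$, then $v_{r-1}=z_1^{(i)}$ and $r\ge d_G(x,z_1)$ settles it at once. Otherwise $d_G(x,v_{r-1})\ge d_G(x,z_1)$, which forces $r\ge d_G(x,z_1)+1$; fixing any $P_i$ and combining the triangle inequality with Theorem \ref{theoExtremeVertices} gives
\[ d_{S(G,t-1)}\!\left(v_{r-1}^{t-1},z_2\cdots z_t\right)\ge d_{S(G,t-1)}\!\left((z_1^{(i)})^{t-1},z_2\cdots z_t\right)-(2^{t-1}-1)\,d_G\!\left(z_1^{(i)},v_{r-1}\right). \]
As $z_1^{(i)}$ and $v_{r-1}$ are both neighbours of $z_1$, we have $d_G(z_1^{(i)},v_{r-1})\le 2$, so the lost term is at most $2(2^{t-1}-1)=2^t-2<2^t-1\le(2^t-1)\bigl(r-d_G(x,z_1)\bigr)$; the surplus in the $r$-term absorbs the loss and again delivers the desired lower bound. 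Matching the two bounds closes the case $j=1$, and the reduction of the first paragraph completes the proof.
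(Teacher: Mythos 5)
Your proposal is correct and follows essentially the same route as the paper: strip the common prefix via Lemma \ref{lemmaDGInside}, obtain the upper bound by routing a path over a $G$-geodesic and evaluating the pieces with Lemma \ref{lemmaDGInside} and Theorem \ref{theoExtremeVertices}, invoke Lemma \ref{lemmaDGExtremeVerticesFreeTriangle}(i) for triangle-freeness of the associated $G$-path, and close the gap with the same key estimate that the two penultimate vertices are neighbours of $z_j$, so the tail penalty $d_{S(G,t-j)}\bigl((v_{r-1})^{t-j},(u_{s-1})^{t-j}\bigr)\le 2(2^{t-j}-1)$ is absorbed by the surplus from each extra step. The only cosmetic differences are that you iterate the prefix reduction letter by letter and phrase the lower bound directly (splitting on whether $v_{r-1}$ is a penultimate vertex of some geodesic) where the paper argues by contradiction that the associated $G$-path must itself be a geodesic.
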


\begin{proof}
By Lemma \ref{lemmaDGInside}, we have that $d_{S(G,t)}(x^t,w)=d_{S(G,t-j+1)}(x^{t-j+1},z_j\cdots z_t)$. Let $x=v_0,v_1,v_2,\ldots, v_{r - 1},v_r=z_j$ be a shortest path $P$ between $x$ and $z_j$. Let $P_1$ be a path of minimum length among all the paths from $x^{t-j+1}$ to $z_j\cdots z_t$  having $P$ as its associated path. So, the length of $P_1$ is given by 
\begin{align}
l(P_1)=&d_{S(G,t-j+1)}(x^{t-j+1},x(v_1)^{t-j})+\sum_{i=0}^{r-2} d_{S(G,t-j+1)}(v_{i+1}(v_i)^{t-j},v_{i+1}(v_{i+2})^{t-j})+r\nonumber\\
&+d_{S(G,t-j+1)}(v_r(v_{r-1})^{t-j},z_j\cdots z_t)\nonumber\\
=&d_{S(G,t-j)}(x^{t-j},v_1^{t-j})+\sum_{i=0}^{r-2} d_{S(G,t-j)}((v_i)^{t-j},(v_{i+2})^{t-j})+r\nonumber\\
&+d_{S(G,t-j)}((v_{r-1})^{t-j},z_{j+1}\cdots z_t)\text{ (by Lemma \ref{lemmaDGInside})}\nonumber\\
=&(2^{t-j}-1)+2(2^{t-j}-1)(r-1)+r+d_{S(G,t-j)}((v_{r-1})^{t-j},z_{j+1}\cdots z_t)\nonumber\\
& \text{ (by Theorem \ref{theoExtremeVertices})}.\label{eqExtreme}
\end{align}
Now, let $P_2$ be a shortest path between $x^{t-j+1}$ and $z_j\cdots z_t$ and let $P_2'$ be the $G$-path associated to $P_2$. By Lemma \ref{lemmaDGExtremeVerticesFreeTriangle} and Theorem \ref{theoExtremeVertices}, we learned that $P_2'$ is triangle-free.
Suppose that $P_2'$ given by $x=u_0,u_1,u_2,\ldots, u_{s - 1},u_s=z_j$ has length  $s>d_G(x,z_j)$. Analogously to the way in which we obtained the length of $P_1$ in (\ref{eqExtreme}), we deduce that the length of $P_2$ is given by 
$$l(P_2)=(2^{t-j}-1)+2(2^{t-j}-1)(s-1)+s+d_{S(G,t-j)}((u_{s-1})^{t-j},z_{j+1}\cdots z_t).$$
Hence,
\begin{align*}
l(P_2)>&(2^{t-j}-1)+2(2^{t-j}-1)r+r+d_{S(G,t-j)}((u_{s-1})^{t-j},z_{j+1}\cdots z_t)\text{ (as }s\ge r+1)\\
=&(2^{t-j}-1)+2(2^{t-j}-1)(r-1)+2(2^{t-j}-1)+r+d_{S(G,t-j)}((u_{s-1})^{t-j},z_{j+1}\cdots z_t)\\
\ge &(2^{t-j}-1)+2(2^{t-j}-1)(r-1)+r+d_{S(G,t-j)}((v_{r-1})^{t-j},(u_{s-1})^{t-j})\\
&+d_{S(G,t-j)}((u_{s-1})^{t-j},z_{j+1}\cdots z_t)\text{ (by Theorem \ref{theoExtremeVertices} and }u_{s-1},v_{r-1}\in N(z_j))\\
\ge &(2^{t-j}-1)+2(2^{t-j}-1)(r-1)+r+
d_{S(G,t-j)}((v_{r-1})^{t-j},z_{j+1}\cdots z_t)\\
& \text{ (by triangle inequality)}\\
=&l(P_1),
\end{align*}
which is a contradiction. Therefore, the $G$-path associated to any shortest path between  $x^{t-j+1}$ and $z_j\cdots z_t$ is a shortest path between $x$ and $z_j$, so that (\ref{eqExtreme}) leads to the result.
\end{proof}

We can use Theorem \ref{remExtremeArbitraryVertices} as a tool to prove the following known result. 

\begin{corollary}{\rm \cite{Klavzar1997}}\label{remarkCompleteExtremeArbitraryVertices}
Let $t\ge 1$ and $n\ge 2$ be integers, let $K_n=(V,E)$ be a complete graph, $x\in V$ and and $w=z_1z_2\cdots z_t\in V^t$. Then 
$$d_{S(K_n,t)}(x^t,w)=\sum_{z_i\ne x} 2^{t-i}.$$
\end{corollary}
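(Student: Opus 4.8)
The plan is to derive this as a direct specialization of Theorem~\ref{remExtremeArbitraryVertices}, proceeding by induction on $t$. The base case $t=1$ is immediate: $d_{S(K_n,1)}(x^1,z_1)=d_{K_n}(x,z_1)$, which equals $1=2^{0}$ if $z_1\ne x$ and $0$ if $z_1=x$, matching $\sum_{z_i\ne x}2^{t-i}$ in either case. For the inductive step I would first dispose of the trivial case $w=x^t$, where both sides vanish (the sum being empty). Otherwise let $j$ be the smallest index with $z_j\ne x$, so that $w=x^{j-1}z_jz_{j+1}\cdots z_t$ with $z_j\ne x$, which is precisely the shape of word required by Theorem~\ref{remExtremeArbitraryVertices}.

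The key observation is that in a complete graph the formula collapses. Since $x\ne z_j$, we have $d_{K_n}(x,z_j)=1$ and $\mathcal{P}(x,z_j)$ consists of the single edge $x,z_j$; hence the neighbour of $z_j$ on this unique shortest path is $z_j^{(i)}=x$, and the minimum in Theorem~\ref{remExtremeArbitraryVertices} is attained trivially by that one path. When $1\le j\le t-1$, substituting $d_G(x,z_j)=1$ and $z_j^{(i)}=x$ into the formula and using $(2^{t-j+1}-1)-(2^{t-j}-1)=2^{t-j}$ gives
\[
d_{S(K_n,t)}(x^t,w)=d_{S(K_n,t-j)}\!\left(x^{t-j},z_{j+1}\cdots z_t\right)+2^{t-j}.
\]
The remaining boundary case $j=t$, that is $w=x^{t-1}z_t$, is not covered by Theorem~\ref{remExtremeArbitraryVertices}, but it follows from the remark preceding it: $d_{S(K_n,t)}(x^t,x^{t-1}z_t)=d_{K_n}(x,z_t)=1=2^{t-t}$, which already equals $\sum_{z_i\ne x}2^{t-i}$ since $z_t$ is the only letter differing from $x$.

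Finally I would close the induction. Since $1\le j\le t-1$ forces $t-j\ge 1$, the induction hypothesis applies to the word $z_{j+1}\cdots z_t$ of length $t-j$ in $S(K_n,t-j)$, yielding $d_{S(K_n,t-j)}(x^{t-j},z_{j+1}\cdots z_t)=\sum_{k:\,z_{j+k}\ne x}2^{(t-j)-k}$; the substitution $i=j+k$ turns this into $\sum_{i>j,\,z_i\ne x}2^{t-i}$. Adding the term $2^{t-j}$, which is exactly the $i=j$ contribution (recall that $z_i=x$ for $i<j$ and $z_j\ne x$), gives $d_{S(K_n,t)}(x^t,w)=\sum_{z_i\ne x}2^{t-i}$, as claimed. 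I expect no serious obstacle here, since the entire analytic content is carried by Theorem~\ref{remExtremeArbitraryVertices}; the only care needed is in correctly handling the two boundary situations ($w=x^t$ and $j=t$) and in the routine re-indexing of the summation when passing to the smaller generalized Sierpi\'{n}ski graph.
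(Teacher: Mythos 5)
Your proof is correct and follows exactly the route the paper intends: the paper gives no written proof, merely noting that Theorem~\ref{remExtremeArbitraryVertices} can be used "as a tool" to derive this known result, and your specialization (unique shortest path in $K_n$, $z_j^{(i)}=x$, $(2^{t-j+1}-1)-(2^{t-j}-1)=2^{t-j}$, induction on $t$ with the $w=x^t$ and $j=t$ boundary cases handled separately) is the natural and complete way to carry that out.
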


Theorem \ref{remExtremeArbitraryVertices} leads to Algorithm \ref{DG1algorithm} which allows us to compute the distance between an extreme vertex and any vertex of $S(G,t)$. In this algorithm we are using two functions, $dist(x,y)$ and $dist^*(x,y)$. The first one gives the distance between $x$ and $y$ and the second one gives the same distance and stores in $\nu(x,y)$ the set of neighbours of $y$ lying on the corresponding  shortest paths. 
For instance, if  we compute the distances by using Dijkstra's algorithm, then  Algorithm \ref{DG1algorithm} has time complexity of $O(t|V|^2)$, while Dijkstra's algorithm for $S(G,t)$ has time complexity of $O(|V|^{2t})$. 

\begin{algorithm}[h]
\caption{}
\label{DG1algorithm}
\begin{algorithmic}[0]
\State {\bf Input}: A connected graph $G$, a vertex $x$ and a word  $w = z_1z_{2}\cdots z_{t}$.
\State {\bf Output}:  $d_{S(G,t)}(x^t,w)$
\Function{RecursiveExtreme}{$x,w$}
\State $j\gets 1$
\While {$j< t\text{ \textbf{and} } x=z_j$}
\State $j\gets j+1$
\EndWhile
\If{$j=t$}
\State $d_{S(G,t)}(x^t,w)\gets dist(x,z_t)$
\Else
\State $m\gets +\infty$
\State $(d_G(x,z_j),\nu(x,z_j))\gets dist^*(x,z_j)$
\ForEach {$v \in \nu(x,z_j)$}
\State $m\gets \min\{m,\textsc{RecursiveExtreme}(v,z_{j+1}\cdots z_t) \}$
\EndFor
\State $d_{S(G,t)}(x^t,w)\gets m+(2^{t-j+1}-1)d_G(x,z_{j})-(2^{t-j}-1)$
\EndIf
\EndFunction
\end{algorithmic}
\end{algorithm}

The result exposed in Lemma \ref{lemmaDGExtremeVerticesFreeTriangle} cannot be generalized to the case of two non-extreme vertices. For instance, for the graph $S(G,3)$ shown in Figure \ref{FigureExDistance} the $G$-path $1,2,4,3$ associated to the shortest path between $121$ and $344$ contains two triangles. Our next result concerns triangle-free $G$-paths.

\begin{theorem}\label{theoDistG2}
Let $G=(V,E)$ be a connected non-trivial graph, $t\ge 2$ an integer, $x,y,z\in V$ and $w,w'\in V^t$ such that $w=z^{j-1}xx_{j+1}\cdots x_t$, $w'=z^{j-1}yy_{j+1}\cdots y_t$, $1\le j\le t-1$ and $x\ne y$. If $(a)$ $x$ does not belong to any cycle or  $(b)$  any  path $x,u_1,\dots, u_{s-1},y$  of length  $s\ge d_G(x,y)+2$ satisfies that $x\not\in N_G(u_2)$ \text{\rm (}or $y\not\in N_G(u_{s-2}))$, then
$$d_{S(G,t)}(w,w')=\lambda(x,y)+(2^{t-j+1}-1)d_G(x,y)-2(2^{t-j}-1),$$
where $$\lambda(x,y)=\min_{P_i\in\mathcal{P}(x,y)}\left\{d_{S(G,t-j)}\left(\left(x^{(i)}\right)^{t-j},x_{j+1}\cdots x_t\right)+d_{S(G,t-j)}\left(\left(y^{(i)}\right)^{t-j},y_{j+1}\cdots y_t\right)\right\}.$$
\end{theorem}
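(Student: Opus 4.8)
The plan is to reduce to the two-letter-prefix situation and then run the same upper-bound/lower-bound scheme used for Theorems~\ref{theoExtremeVertices} and \ref{remExtremeArbitraryVertices}, the essential new feature being that \emph{both} endpoints are now non-extreme. First I would strip the common prefix: since $w$ and $w'$ share the block $z^{j-1}$, applying Lemma~\ref{lemmaDGInside} $j-1$ times gives $d_{S(G,t)}(w,w')=d_{S(G,T)}(xa,yb)$, where $T:=t-j+1$, $a:=x_{j+1}\cdots x_t$ and $b:=y_{j+1}\cdots y_t$ (so $T-1=t-j$). Because $x\neq y$, the vertices $xa$ and $yb$ lie in distinct copies $\langle V_x\rangle$, $\langle V_y\rangle$ of $S(G,T-1)$, so every $xa$--$yb$ path carries a well-defined associated $G$-path $x=v_0,v_1,\dots,v_r=y$ and decomposes into in-copy segments joined by exactly $r$ connecting edges $v_i(v_{i+1})^{T-1}\sim v_{i+1}(v_i)^{T-1}$.

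For the upper bound I would fix a geodesic $x=v_0,v_1,\dots,v_d=y$ of $G$, where $d:=d_G(x,y)$. A shortest path of $G$ is induced, hence triangle-free, so $d_G(v_i,v_{i+2})=2$ for every $i$; lifting it to $S(G,T)$ through the connector vertices and evaluating each in-copy segment by Lemma~\ref{lemmaDGInside} together with Theorem~\ref{theoExtremeVertices} (each interior segment costs $d_{S(G,T-1)}((v_i)^{T-1},(v_{i+2})^{T-1})=2(2^{T-1}-1)$), the total length equals $d_{S(G,T-1)}((v_1)^{T-1},a)+d_{S(G,T-1)}((v_{d-1})^{T-1},b)+(d-1)\cdot 2(2^{T-1}-1)+d$. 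Minimising the two endpoint terms over $P_i\in\mathcal{P}(x,y)$ turns them into $\lambda(x,y)$, while the remaining constant simplifies to $(2^{t-j+1}-1)d_G(x,y)-2(2^{t-j}-1)$; this direction needs neither hypothesis (a) nor (b).

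For the matching lower bound I would take an arbitrary geodesic of $S(G,T)$ from $xa$ to $yb$ and show its associated $G$-path $Q\colon x=v_0,\dots,v_r=y$ is a triangle-free geodesic of $G$. Triangle-freeness is exactly Lemma~\ref{lemmaDGExtremeVerticesFreeTriangle}(ii), whose hypothesis $x\notin N(v_2)$ (and symmetrically $y\notin N(v_{r-2})$) is what (a) and (b) supply: under (a) an edge $xv_2$ would close the triangle $x v_1 v_2$ and place $x$ on a cycle, while (b) is precisely the statement that the relevant long detours avoid these chords. Once $Q$ is triangle-free, the same segment-by-segment evaluation shows that a path over a $G$-path of length $r$ has overhead $(r-1)\cdot 2(2^{T-1}-1)+r$, so each extra $G$-edge adds $2^{T}-1$. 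Bounding the endpoint terms against those of a $G$-geodesic via the triangle inequality and Theorem~\ref{theoExtremeVertices} (the two candidate $x$-neighbours, resp.\ $y$-neighbours, lie in a common neighbourhood, hence at distance at most $2$, costing at most $2(2^{T-1}-1)$ per end) shows that any detour with $r\ge d+2$ is strictly longer, a contradiction; this pins $r$ down to $r\in\{d,d+1\}$, and when $r=d$ the path $Q$ lies in $\mathcal{P}(x,y)$ and its length is at least the upper-bound value.

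I expect the decisive difficulty to be the borderline case $r=d+1$, which is the genuinely new phenomenon compared with Theorem~\ref{remExtremeArbitraryVertices}. There one endpoint was extreme, so slack could accrue at only one end; here both endpoints are free, so the per-edge overhead gain of $2^{T}-1$ no longer dominates the combined endpoint slack (up to $2(2^{T-1}-1)$ at \emph{each} end). Closing this case is exactly where the structural hypotheses (a)/(b) must do real work, and the cleanest device I see is to choose the competing $G$-geodesic so that it shares its first edge (respectively its last edge) with $Q$, making the triangle-inequality slack vanish at that end; the heart of the proof is then to verify, under (a) or (b), that such an aligned geodesic is always available, so that the remaining single-ended slack can be absorbed by the overhead gain. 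I would treat the verification of this alignment as the main obstacle and carry it out with the most care.
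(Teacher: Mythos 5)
Your reduction via Lemma~\ref{lemmaDGInside}, your upper bound obtained by lifting a $G$-geodesic, and your elimination of detours of length $r\ge d+2$ reproduce the paper's argument step for step (the paper writes $s$ for your detour length and $r$ for your $d=d_G(x,y)$). Under hypothesis (a) the ``alignment'' device you propose is exactly what is needed, and it is automatic rather than something to be verified: if $x$ lies on no cycle, any two $x$--$y$ paths leave $x$ along the same edge (otherwise the union of the two paths minus $x$ connects two distinct neighbours of $x$ in $G-x$ and closes a cycle through $x$), so $u_1=v_1$, the slack is confined to the $y$-end, and each extra $G$-edge, worth $2(2^{t-j}-1)+1$ by Theorem~\ref{theoExtremeVertices}, strictly beats the at most $2(2^{t-j}-1)$ of slack in $d_{S(G,t-j)}\bigl((v_{r-1})^{t-j},(u_{s-1})^{t-j}\bigr)$. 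This disposes of every detour of length $\ge d+1$ at once, so under (a) the case $d+1$ is not special and your plan closes completely.

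The genuine gap is the one you flag yourself: under hypothesis (b) you leave the case $r=d+1$ open, and the aligned geodesic you hope to extract from (b) is not available from that hypothesis. Premiss (b) constrains only paths of length at least $d_G(x,y)+2$; it says nothing about a detour of length exactly $d_G(x,y)+1$, so for such a detour you can neither invoke Lemma~\ref{lemmaDGExtremeVerticesFreeTriangle}(ii) to get triangle-freeness (it needs $x\notin N(v_2)$, which (b) does not supply at this length) nor force the first or last edge of the detour to agree with a geodesic. A write-up whose declared ``main obstacle'' is deferred to be ``carried out with the most care'' has not proved this case. You will also not find the missing alignment in the paper: its case (b) simply asserts that the detour has length at least $d+2$ (``$s\ge r+2$'' in its notation) and then absorbs the slack at both ends using two extra edges; that assertion is precisely the point where a length-$(d+1)$ associated $G$-path must be excluded, and no justification is given. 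The paper's own Figure~\ref{CounterExample} and the $\vartheta'$ term of Theorem~\ref{theoDistGTriangleFree} show that associated $G$-paths of length $d_G(x,y)+1$ do occur and can be strictly shorter, so any completion of your argument has to explain why hypothesis (b) forbids them; nothing in your proposal does.
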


\begin{proof}
By Lemma \ref{lemmaDGInside}, we have that $d_{S(G,t)}(w,w')=d_{S(G,t-j+1)}(xx_{j+1}\cdots x_t,yy_{j+1}\cdots y_t)$. Let $x=v_0,v_1,v_2,\ldots, v_{r - 1},v_r=y$ be a shortest path $P$ between $x$ and $y$. Let $P_1$ be a path of minimum length among all the paths from $xx_{j+1}\cdots x_t$ to $yy_{j+1}\cdots y_t$  having $P$ as its associated path. So, the length of $P_1$ is given by 
\begin{align}
l(P_1)=&d_{S(G,t-j+1)}(xx_{j+1}\cdots x_t,x(v_1)^{t-j})+\sum_{i=0}^{r-2} d_{S(G,t-j+1)}(v_{i+1}(v_i)^{t-j},v_{i+1}(v_{i+2})^{t-j})\nonumber\\
&+r+d_{S(G,t-j+1)}(v_r(v_{r-1})^{t-j},yy_{j+1}\cdots y_t)\nonumber\\
=&d_{S(G,t-j)}(x_{j+1}\cdots x_t,(v_1)^{t-j})+\sum_{i=0}^{r-2} d_{S(G,t-j)}((v_i)^{t-j},(v_{i+2})^{t-j})+r+\nonumber\\
&+d_{S(G,t-j)}((v_{r-1})^{t-j},y_{j+1}\cdots y_t)\text{ (by Lemma \ref{lemmaDGInside})}\nonumber\\
=&d_{S(G,t-j)}((x_{j+1}\cdots x_t,(v_1)^{t-j})+2(2^{t-j}-1)(r-1)+r\nonumber\\
&+d_{S(G,t-j)}((v_{r-1})^{t-j},(y_{j+1}\cdots y_t) \text{ (by Theorem \ref{theoExtremeVertices})}.\label{deductLength}
\end{align}
Let $P_2$ be a shortest path between $xx_{j+1}\cdots x_t$ and $yy_{j+1}\cdots y_t$, and let $x=u_0,u_1,u_2,$ $\ldots, u_{s - 1},u_s=y$ be the $G$-path $P_2'$ associated to $P_2$. Suppose that  $s>d_G(x,y)$.

We first assume premiss (a). In this case $u_1=v_1$ and  $s\ge r+1$, and  by Lemma \ref{lemmaDGExtremeVerticesFreeTriangle} and Theorem \ref{theoExtremeVertices} we can conclude that $P_2'$ is triangle-free.  Following a procedure analogous to that described in (\ref{deductLength}), we deduce that the length of $P_2$ is given by 
\begin{align*}
l(P_2)=&d_{S(G,t-j)}(x_{j+1}\cdots x_t,(v_1)^{t-j})+2(2^{t-j}-1)(s-1)+s\\
&+d_{S(G,t-j)}((u_{s-1})^{t-j},y_{j+1}\cdots y_t).
\end{align*}
Hence,
\begin{align*}
l(P_2)
>&d_{S(G,t-j)}(x_{j+1}\cdots x_t,(v_1)^{t-j})+2(2^{t-j}-1)r+r\\
&+d_{S(G,t-j)}((u_{s-1})^{t-j},y_{j+1}\cdots y_t)\text{ (as }s\ge r+1)\\
=&d_{S(G,t-j)}(x_{j+1}\cdots x_t,(v_1)^{t-j})+2(2^{t-j}-1)(r-1)+r\\
&+2(2^{t-j}-1)+d_{S(G,t-j)}((u_{s-1})^{t-j},y_{j+1}\cdots y_t)\\
\ge &d_{S(G,t-j)}(x_{j+1}\cdots x_t,(v_1)^{t-j})+2(2^{t-j}-1)(r-1)+r\\
&+d_{S(G,t-j)}((v_{r-1})^{t-j},(u_{s-1})^{t-j})+d_{S(G,t-j)}((u_{s-1})^{t-j},y_{j+1}\cdots y_t)\\
&\text{ (by Theorem \ref{theoExtremeVertices} and the fact that  $u_{s-1},v_{r-1}\in N(y)$)}\\
\ge &d_{S(G,t-j)}((x_{j+1}\cdots x_t,(v_1)^{t-j})+2(2^{t-j}-1)(r-1)+r\\
&+d_{S(G,t-j)}((v_{r-1})^{t-j},y_{j+1}\cdots y_t) \text{ (by triangle inequality)}\\
=&l(P_1),
\end{align*}
which is a contradiction.

Finally, assume  premiss (b). In this case, $s\ge r+2$ and by  Lemma \ref{lemmaDGExtremeVerticesFreeTriangle} and Theorem \ref{theoExtremeVertices} we can conclude that $P_2'$ is triangle-free. Following a procedure analogous to that described in (\ref{deductLength}), we deduce that the length of $P_2$ is given by 
\begin{align*}
l(P_2)=&d_{S(G,t-j)}(x_{j+1}\cdots x_t,(u_1)^{t-j})+2(2^{t-j}-1)(s-1)+s\\
&+d_{S(G,t-j)}((u_{s-1})^{t-j},y_{j+1}\cdots y_t).
\end{align*}
Hence,
\begin{align*}
l(P_2)
>&d_{S(G,t-j)}(x_{j+1}\cdots x_t,(u_1)^{t-j})+2(2^{t-j}-1)(r+1)+r\\
&+d_{S(G,t-j)}((u_{s-1})^{t-j},y_{j+1}\cdots y_t)\text{ (as }s\ge r+2)\\
=&d_{S(G,t-j)}(x_{j+1}\cdots x_t,(u_1)^{t-j})+2(2^{t-j}-1)+2(2^{t-j}-1)(r-1)+r\\
&+2(2^{t-j}-1)+d_{S(G,t-j)}((u_{s-1})^{t-j},y_{j+1}\cdots y_t)\\
\ge &d_{S(G,t-j)}(x_{j+1}\cdots x_t,(u_1)^{t-j})+d_{S(G,t-j)}((u_1)^{t-j},(v_1)^{t-j})+2(2^{t-j}-1)(r-1)\\
&+r+d_{S(G,t-j)}((v_{r-1})^{t-j},(u_{s-1})^{t-j})+d_{S(G,t-j)}((u_{s-1})^{t-j},y_{j+1}\cdots y_t)\\
&\text{ (by Theorem \ref{theoExtremeVertices} and the fact that $u_{1},v_{1}\in N(x)$ and $u_{s-1},v_{r-1}\in N(y)$)}\\
\ge &d_{S(G,t-j)}((x_{j+1}\cdots x_t,(v_1)^{t-j})+2(2^{t-j}-1)(r-1)+r\\
&+d_{S(G,t-j)}((v_{r-1})^{t-j},y_{j+1}\cdots y_t) \text{ (by triangle inequality)}\\
=&l(P_1),
\end{align*}
which is a contradiction.

Therefore, the $G$-path associated to any shortest path between  $xx_{j+1}\cdots x_t$ and $yy_{j+1}\cdots y_t$ is a shortest path between $x$ and $y$, so that
 (\ref{deductLength}) leads to the result.
\end{proof}

From Theorem \ref{theoDistG2} we can state the formula for the distance between vertices in $S(G,t)$ for any bipartite graph $G$.

\begin{corollary}\label{bipartiteDist}
Let $G=(V,E)$ be a connected bipartite graph, $t\ge 2$ an integer, $x,y,z\in V$ and $w,w'\in V^t$ such that $w=z^{j-1}xx_{j+1}\cdots x_t$, $w'=z^{j-1}yy_{j+1}\cdots y_t$, $1\le j\le t-1$ and $x\ne y$.  Then
$$d_{S(G,t)}(w,w')=\lambda(x,y)+(2^{t-j+1}-1)d_G(x,y)-2(2^{t-j}-1),$$
where $$\lambda(x,y)=\min_{P_i\in\mathcal{P}(x,y)}\left\{d_{S(G,t-j)}\left(\left(x^{(i)}\right)^{t-j},x_{j+1}\cdots x_t\right)+d_{S(G,t-j)}\left(\left(y^{(i)}\right)^{t-j},y_{j+1}\cdots y_t\right)\right\}.$$
\end{corollary}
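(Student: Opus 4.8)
The plan is to show that every connected bipartite graph automatically satisfies premise (b) of Theorem \ref{theoDistG2}, after which the formula follows immediately by invoking that theorem. Thus the whole argument reduces to a single structural observation about bipartite graphs, and no genuinely new distance computation is required.

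First I would recall that a bipartite graph contains no odd cycles, and in particular no triangle (no $3$-cycle). Now fix $x,y\in V$ with $x\ne y$, so that $d_G(x,y)\ge 1$, and consider any path $x=u_0,u_1,u_2,\ldots,u_{s-1},u_s=y$ of length $s\ge d_G(x,y)+2\ge 3$. Since a path has pairwise distinct vertices, $x$, $u_1$, $u_2$ are three distinct vertices, and the edges $\{x,u_1\}$ and $\{u_1,u_2\}$ are present by the definition of a path. If in addition $x\in N_G(u_2)$, then the edge $\{u_2,x\}$ would also be present, so $\{x,u_1,u_2\}$ would induce a triangle in $G$, contradicting bipartiteness. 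Hence $x\notin N_G(u_2)$, and since this holds for every path of the prescribed length, premise (b) is satisfied (indeed the symmetric condition $y\notin N_G(u_{s-2})$ holds for the same reason, so the disjunction in (b) is certainly met).

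With premise (b) verified, Theorem \ref{theoDistG2} applies verbatim and yields the claimed expression for $d_{S(G,t)}(w,w')$ in terms of $\lambda(x,y)$, completing the proof. I do not anticipate any real obstacle here: the only content is the elementary fact that bipartiteness forbids triangles, which is precisely the non-adjacency condition $x\notin N_G(u_2)$ demanded by premise (b). I would also remark that premise (a) is not the relevant hypothesis to check, since a connected bipartite graph will in general contain (even) cycles; it is the triangle-freeness captured by (b), rather than the acyclicity of (a), that bipartiteness supplies for free.
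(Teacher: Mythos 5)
Your reduction to premiss (b) of Theorem \ref{theoDistG2} proves too much, and that is the sign of a genuine gap. Your verification of (b) uses only that $G$ is triangle-free: for any path $x,u_1,u_2,\ldots$, the adjacency $x\in N_G(u_2)$ would create a triangle. But that argument applies verbatim to every triangle-free graph, so if it really sufficed you would have established $d_{S(G,t)}(w,w')=\vartheta(x,y)$ for all triangle-free graphs. The paper's own Figure \ref{CounterExample} refutes that: there $G$ is triangle-free and $d_{S(G,2)}(16,47)=9=\vartheta'(1,4)$, while $\vartheta(1,4)=\lambda(1,4)+3\,d_G(1,4)-2=10$; indeed Theorem \ref{theoDistGTriangleFree} shows that for triangle-free graphs one must take $\min\{\vartheta,\vartheta'\}$. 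The culprit is that premiss (b), read literally, only constrains paths of length at least $d_G(x,y)+2$ and says nothing about paths of length exactly $d_G(x,y)+1$; yet it is precisely a $G$-path of length $d_G(x,y)+1$ (the path $1,2,3,4$ in that example) that can be associated to a shortest path of $S(G,t)$ and pull the distance below $\vartheta(x,y)$. Equivalently, the step ``$s\ge r+2$'' in the proof of Theorem \ref{theoDistG2} under premiss (b) is the point that actually needs justifying for bipartite $G$, and your observation does not deliver it.

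What bipartiteness buys---and what your proof must invoke---is parity: in a bipartite graph every $x$--$y$ walk has length congruent to $d_G(x,y)$ modulo $2$, so the set $\mathcal{P}'(x,y)$ of paths of length $d_G(x,y)+1$ is empty and any associated $G$-path longer than a shortest one automatically has length at least $d_G(x,y)+2$. Combined with triangle-freeness (which, via Lemma \ref{lemmaDGExtremeVerticesFreeTriangle}, makes the associated $G$-path triangle-free and keeps the relevant neighbours of $x$ and of $y$ at distance at most $2$), this is exactly what the premiss-(b) computation in the proof of Theorem \ref{theoDistG2} requires; alternatively, apply Theorem \ref{theoDistGTriangleFree} and observe that the $\vartheta'$ branch disappears because $\mathcal{P}'(x,y)=\emptyset$. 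Add the parity observation and your derivation is complete; without it the appeal to Theorem \ref{theoDistG2} is not sound.
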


For instance, for the cycle graph $C_4=(V,E)$ whose vertex set is $V=\{a,b,c,d\}$ and edge set is $E=\{\{a,b\},\{b,c\},\{c,d\},\{d,a\}\}$, Corollary \ref{bipartiteDist} leads to $d_{S(C_4,3)}(dab,bdc)=13$. In this case, $t=3$, $j=1$, $x=d$, $y=b$ and 
\begin{align*}
\lambda(d,b)&=\min\{d_{S(C_4,2)}(aa,ab)+d_{S(C_4,2)}(aa,dc),d_{S(C_4,2)}(cc,ab)+d_{S(C_4,2)}(cc,dc)\}\\
&=\min\{1+4,5+2\}\\
&=5.
\end{align*}
Analogously, $d_{S(C_4,3)}(dab,cad)=8$, as $t=3$, $j=1$, $x=d$, $y=c$ and 
$
\lambda(d,c)=d_{S(C_4,2)}(cc,ab)+d_{S(C_4,2)}(dd,ad)=5+2=7.
$

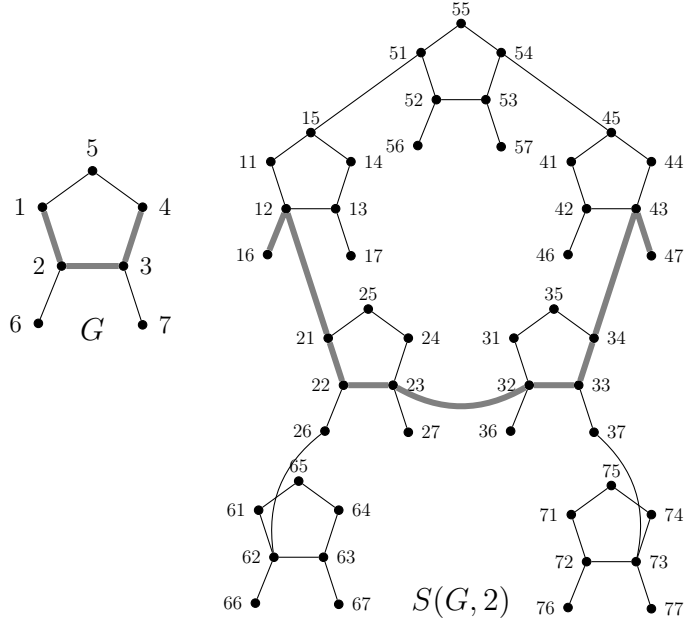
\begin{figure}[!ht]
\centering
\begin{tikzpicture}[transform shape, inner sep = .4mm]
\def\radius{0.7} 
\foreach \ind in {1,...,5}
{
\pgfmathparse{90 + 360/5*\ind};
\node [draw=black, shape=circle, fill=black] (\ind) at (\pgfmathresult:\radius cm) {};
\ifthenelse{\ind=5}
{
\node [scale=.8] at ([yshift=.3 cm]\ind) {$\ind$};
}
{
\ifthenelse{\ind=3 \OR \ind=4}
{
\node [scale=.8] at ([xshift=.3 cm]\ind) {$\ind$};
}
{
\node [scale=.8] at ([xshift=-.3 cm]\ind) {$\ind$};
};
};
\ifthenelse{\ind>1}
{
\pgfmathparse{int(\ind-1)};
\draw[black] (\ind) -- (\pgfmathresult);
}
{};
}
\draw[black] (1) -- (5);
\pgfmathparse{\radius*sin(72)/sin(54)};
\node [draw=black, shape=circle, fill=black] (6) at ([shift=({248:\pgfmathresult cm})]2) {};
\node [scale=.8] at ([xshift=-.3 cm]6) {$6$};
\draw[black] (2) -- (6);
\node [draw=black, shape=circle, fill=black] (7) at ([shift=({-72:\pgfmathresult cm})]3) {};
\node [scale=.8] at ([xshift=.3 cm]7) {$7$};
\draw[black] (3) -- (7);
\node at ([yshift=-3*\radius cm]5) {$G$};
\def\widenodetwo{.4};
\pgfmathparse{7*\radius};
\node (center) at (\pgfmathresult cm,0) {};
\foreach \x in {1,...,5}
{
\pgfmathparse{90 + 360/5*\x};
\node (center\x) at ([shift=({\pgfmathresult:3*\radius cm})]center) {};
\foreach \ind in {1,...,5}
{
\pgfmathparse{90 + 360/5*\ind};
\node [draw=black, shape=circle, fill=black, inner sep = \widenodetwo mm] (\x\ind) at ([shift=({\pgfmathresult:.8*\radius cm})]center\x) {};
\ifthenelse{\ind=5}
{
\node [scale=.6] at ([yshift=.2 cm]\x\ind) {$\x\ind$};
}
{
\ifthenelse{\ind=3 \OR \ind=4}
{
\node [scale=.6] at ([xshift=.3 cm]\x\ind) {$\x\ind$};
}
{
\node [scale=.6] at ([xshift=-.3 cm]\x\ind) {$\x\ind$};
};
};
\ifthenelse{\ind>1}
{
\pgfmathparse{int(\ind-1)};
\draw[black] (\x\ind) -- (\x\pgfmathresult);
}
{};
}
\draw[black] (\x1) -- (\x5);
\pgfmathparse{.8*\radius*sin(72)/sin(54)};
\node [draw=black, shape=circle, fill=black, inner sep = \widenodetwo mm] (\x6) at ([shift=({248:\pgfmathresult cm})]\x2) {};
\node [scale=.6] at ([xshift=-.3 cm]\x6) {$\x6$};
\draw[black] (\x2) -- (\x6);
\node [draw=black, shape=circle, fill=black, inner sep = \widenodetwo mm] (\x7) at ([shift=({-72:\pgfmathresult cm})]\x3) {};
\node [scale=.6] at ([xshift=.3 cm]\x7) {$\x7$};
\draw[black] (\x3) -- (\x7);
}
\pgfmathparse{3*\radius*sin(72)/sin(54)};
\node (center6) at ([shift=({248:\pgfmathresult cm})]center2) {};
\node (center7) at ([shift=({-72:\pgfmathresult cm})]center3) {};
\foreach \x in {6,7}
{
\foreach \ind in {1,...,5}
{
\pgfmathparse{90 + 360/5*\ind};
\node [draw=black, shape=circle, fill=black, inner sep = \widenodetwo mm] (\x\ind) at ([shift=({\pgfmathresult:.8*\radius cm})]center\x) {};
\ifthenelse{\ind=5}
{
\node [scale=.6] at ([yshift=.2 cm]\x\ind) {$\x\ind$};
}
{
\ifthenelse{\ind=3 \OR \ind=4}
{
\node [scale=.6] at ([xshift=.3 cm]\x\ind) {$\x\ind$};
}
{
\node [scale=.6] at ([xshift=-.3 cm]\x\ind) {$\x\ind$};
};
};
\ifthenelse{\ind>1}
{
\pgfmathparse{int(\ind-1)};
\draw[black] (\x\ind) -- (\x\pgfmathresult);
}
{};
}
\draw[black] (\x1) -- (\x5);
\pgfmathparse{.8*\radius*sin(72)/sin(54)};
\node [draw=black, shape=circle, fill=black, inner sep = \widenodetwo mm] (\x6) at ([shift=({248:\pgfmathresult cm})]\x2) {};
\node [scale=.6] at ([xshift=-.3 cm]\x6) {$\x6$};
\draw[black] (\x2) -- (\x6);
\node [draw=black, shape=circle, fill=black, inner sep = \widenodetwo mm] (\x7) at ([shift=({-72:\pgfmathresult cm})]\x3) {};
\node [scale=.6] at ([xshift=.3 cm]\x7) {$\x7$};
\draw[black] (\x3) -- (\x7);
}
\foreach \u/\v in {1/2,2/3,3/4,4/5,5/1,2/6,3/7}
{
\ifthenelse{\u=2 \AND \v=3 \OR \v=6}
{
\draw (\u\v) to[bend right] (\v\u);
}
{
\ifthenelse{\u=3 \AND \v=7}
{
\draw (\u\v) to[bend left] (\v\u);
}
{
\draw[black] (\u\v) -- (\v\u);
};
};
}
\node at ([yshift=-11*\radius cm]55) {$S(G,2)$};
\draw[gray, line width=.8 mm] (16)--(12)--(21)--(22)--(23);
\draw[gray, line width=.8 mm]  (32)--(33)--(34)--(43)--(47);
\draw[gray, line width=.8 mm] (23) to[bend right] (32);
\draw[gray, line width=.8 mm] (1)--(2)--(3)--(4);
\end{tikzpicture}
\caption{The $G$-path $1,2,3,4$ associated to the shortest path $16,12,21,22,23,$ $32,33,34,43,47$ between $16$ and $47$, is not a shortest path between the vertices $1$ and $4$.}\label{CounterExample}
\end{figure}

Given two vertices $x,y\in V$, we define $\mathcal{P}'(x,y)$ as the set of all paths between $x$ and $y$ of length $d_G(x,y)+1$. For any $P_k\in \mathcal{P}'(x,y)$ between $x$ and $y$, the neighbour of $y$ lying on $P_k$ will be denoted by $y^{(k)}$. With this notation in mind we proceed to state the following  result which can be deduced by analogy to the proof of  Theorem \ref{theoDistG2}.

\begin{theorem}\label{theoDistGTriangleFree}
Let $G=(V,E)$ be a connected non-trivial triangle-free graph, $t\ge 2$ an integer, $x,y,z\in V$ and $w,w'\in V^t$ such that $w=z^{j-1}xx_{j+1}\cdots x_t$, $w'=z^{j-1}yy_{j+1}\cdots y_t$, $1\le j\le t-1$ and $x\ne y$. Then 
$$
d_{S(G,t)}(w,w')=\min\{\vartheta(x,y),\vartheta'(x,y)\},
$$
where 
$$\vartheta(x,y)=\lambda(x,y)+\left(2^{t-j+1}-1\right) d_G(x,y)-2(2^{t-j}-1),$$
$$\vartheta'(x,y)=\lambda'(x,y)+\left(2^{t-j+1}-1\right)d_G(x,y)+1,$$
$$\lambda(x,y)=\min_{P_i\in\mathcal{P}(x,y)}\left\{d_{S(G,t-j)}\left(\left(x^{(i)}\right)^{t-j},x_{j+1}\cdots x_t\right)+d_{S(G,t-j)}\left(\left(y^{(i)}\right)^{t-j},y_{j+1}\cdots y_t\right)\right\},$$
and
$$\lambda'(x,y)=\min_{P_k\in\mathcal{P}'(x,y)}\left\{d_{S(G,t-j)}\left(\left(x^{(k)}\right)^{t-j},x_{j+1}\cdots x_t\right)+d_{S(G,t-j)}\left(\left(y^{(k)}\right)^{t-j},y_{j+1}\cdots y_t\right)\right\}.$$
\end{theorem}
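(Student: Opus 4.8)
The plan is to mirror the proof of Theorem~\ref{theoDistG2} very closely, the key difference being that when $G$ is triangle-free we no longer need premisses $(a)$ or $(b)$ to rule out the ``short detour'' in which a shortest $G$-path between $x$ and $y$ of length exactly $d_G(x,y)+1$ yields a shorter path in $S(G,t)$ than any geodesic $G$-path does. The quantity $\vartheta(x,y)$ is exactly the expression from Theorem~\ref{theoDistG2}: it is the length of the optimal path in $S(G,t)$ whose associated $G$-path is a genuine shortest path between $x$ and $y$. The new quantity $\vartheta'(x,y)$ accounts for the possibility that the optimal path instead traverses a $G$-path of length $d_G(x,y)+1$. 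The theorem asserts that in the triangle-free case the true distance is the minimum of these two candidates.

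First I would repeat the opening reductions verbatim: by Lemma~\ref{lemmaDGInside} we may assume $j=1$ and work in $S(G,t-j+1)$, and for any fixed $G$-path $P$ between $x$ and $y$ of length $r$, the length of the best $S(G,t)$-path lying over $P$ is, as computed in~(\ref{deductLength}),
\begin{align*}
l(P)=&\;d_{S(G,t-j)}\!\left(x_{j+1}\cdots x_t,(p_1)^{t-j}\right)+2(2^{t-j}-1)(r-1)+r\\
&+d_{S(G,t-j)}\!\left((p_{r-1})^{t-j},y_{j+1}\cdots y_t\right),
\end{align*}
where $p_1,p_{r-1}$ are the second and penultimate vertices of $P$. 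Minimizing over all geodesic $G$-paths ($r=d_G(x,y)$) gives $\vartheta(x,y)$, and minimizing over all $G$-paths of length $r=d_G(x,y)+1$ gives $\vartheta'(x,y)$, since each such length contributes an additive $2(2^{t-j}-1)+1$ relative to the geodesic case, matching the definitions of $\vartheta$ and $\vartheta'$. Thus $\min\{\vartheta(x,y),\vartheta'(x,y)\}$ is an upper bound for the distance.

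The substance is the lower bound: I must show no $G$-path $P_2'$ of length $s\ge d_G(x,y)+2$ can do better. Here I would invoke the triangle-free hypothesis together with part~(ii) of Lemma~\ref{lemmaDGExtremeVerticesFreeTriangle} and Theorem~\ref{theoExtremeVertices} to conclude that the associated $G$-path $P_2'$ is triangle-free, and then run the chain of inequalities exactly as in premiss~(b) of Theorem~\ref{theoDistG2}: since $s\ge r+2$, the path $P_2$ carries at least two excess ``long'' segments of length $2(2^{t-j}-1)$, and two applications of Theorem~\ref{theoExtremeVertices} (using that the second/penultimate vertices of $P_2'$ and of an optimal length-$r$ or length-$(r+1)$ path are all neighbours of $x$, resp.\ $y$) followed by the triangle inequality show $l(P_2)>\vartheta'(x,y)$. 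Hence the infimum over all $G$-paths is attained at length $d_G(x,y)$ or $d_G(x,y)+1$, which is precisely $\min\{\vartheta(x,y),\vartheta'(x,y)\}$.

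The main obstacle I anticipate is the lower-bound bookkeeping for length-$(r+1)$ paths: unlike in Theorem~\ref{theoDistG2}, these are \emph{not} excluded, so I cannot simply derive a contradiction from every non-geodesic $G$-path. I must instead carefully separate the length-$(r+1)$ case (which is permitted and realizes $\vartheta'$) from the length-$\ge(r+2)$ case (which must be beaten), and verify that the triangle-free assumption alone, without premisses $(a)$ or $(b)$, suffices to guarantee triangle-freeness of $P_2'$ so that Lemma~\ref{lemmaDGExtremeVerticesFreeTriangle} applies. Figure~\ref{CounterExample} shows that in the \emph{non}-triangle-free setting a length-$(r+1)$ $G$-path can genuinely be optimal, confirming that $\vartheta'$ is indispensable and that the reduction to lengths at most $d_G(x,y)+1$ is the best possible.
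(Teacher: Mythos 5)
Your proposal is correct and is exactly the argument the paper intends: the paper offers no separate proof of this theorem, saying only that it ``can be deduced by analogy to the proof of Theorem \ref{theoDistG2}'', and your separation of the admissible length-$d_G(x,y)$ and length-$(d_G(x,y)+1)$ $G$-paths (yielding $\vartheta$ and $\vartheta'$, with the $+1$ and the $-2(2^{t-j}-1)$ terms checking out) from the length-$\ge d_G(x,y)+2$ paths (ruled out by the premiss-(b) inequality chain, comparing against a geodesic) is precisely that analogy carried out. The only adjustment I would make: you need not invoke Lemma \ref{lemmaDGExtremeVerticesFreeTriangle}(ii), whose hypothesis $x\notin N(v_2)$ is unavailable here; in a triangle-free graph every $G$-path is automatically triangle-free (three consecutive vertices cannot induce a triangle), which is what guarantees each internal segment contributes exactly $2(2^{t-j}-1)$ via Theorem \ref{theoExtremeVertices}.
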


Figure \ref{CounterExample} shows an example where the path associated to the shortest path between the vertices $16$ and $47$ of $S(G,2)$  is not a shortest path between $1$ and $4$. According to Theorem \ref{theoDistGTriangleFree} we have that $d_{S(G,2)}(14,47)=9=\vartheta'(1,4)$, as $\lambda'(1,4)=2$ while $\lambda(1,4)=6$.

Theorem \ref{theoDistGTriangleFree} leads to Algorithm \ref{DG2algorithm} which allows us to compute the distance between two arbitrary vertices of $S(G,t)$ for any connected triangle-free graph $G$. In this algorithm we are using a function, $dist^{**}(x,y)$ which gives the distance between $x$ and $y$ and stores in $\varphi(x,y)$ the set of pairs $(x',y')$ such that $x'$ and $y'$ are, respectively, neighbours of $x$ and $y$ lying on the  shortest paths between them, and stores in $\varphi'(x,y)$ the set of pairs $(x'',y'')$ such that $x''$ and $y''$ are, respectively, neighbours of $x$ and $y$ lying on the   paths of length $d_G(x,y)+1$. 

\begin{algorithm}[h]
\caption{}
\label{DG2algorithm}
\begin{algorithmic}[0]
\State {\bf Input}: A connected triangle-free graph $G$  and two words $w = x_1x_{2}\cdots x_{t}$ and $w = y_1y_{2}\cdots y_{t}$.
\State {\bf Output}:  $d_{S(G,t)}(w,w')$
\State $j\gets 1$
\While {$j< t\text{ \textbf{and} } x_j=y_j$}
\State $j\gets j+1$
\EndWhile
\If{$j=t$}
\State $d_{S(G,t)}(w,w')\gets dist(x_t,y_t)$
\Else
\State $(d_G(x_j,y_j),\varphi(x_j,y_j),\varphi'(x_j,y_j))\gets dist^{**}(x_j,y_j)$
\State $\lambda\gets +\infty$
\ForEach {$(u,v) \in \varphi(x_j,y_j)$}
\State $d_x \gets \textsc{RecursiveExtreme}(u,x_{j+1}\cdots x_t)$\State $d_y \gets \textsc{RecursiveExtreme}(v,y_{j+1}\cdots y_t)$
\State $\lambda\gets \min\{\lambda, d_x+d_y\}$
\EndFor
\State $\lambda'\gets +\infty$
\ForEach {$(u,v) \in \varphi'(x_j,y_j)$}
\State $d_x \gets \textsc{RecursiveExtreme}(u,x_{j+1}\cdots x_t)$\State $d_y \gets \textsc{RecursiveExtreme}(v,y_{j+1}\cdots y_t)$
\State $\lambda'\gets \min\{\lambda', d_x+d_y\}$
\EndFor
\State $\vartheta\gets \lambda+(2^{t-j+1}-1)d_G(x,z_j)-2(2^{t-j}-1)$
\State $\vartheta'\gets \lambda'+(2^{t-j+1}-1)d_G(x,z_j)+1$
\State $d_{S(G,t)}(w,w')\gets\min\{\vartheta, \vartheta'\}$
\EndIf
\end{algorithmic}
\end{algorithm}

\section{Distances in trees}
As shown in \cite{Rodriguez-Velazquez2015a}, for any tree $T$ and any positive integer $t$ the Sierpi\'{n}ski graph $S(T,t)$ is a tree. Thus, there exists only one path between two vertices of $S(T,t)$. 
Notice that Corollary \ref{bipartiteDist} leads to the next remark.

\begin{remark}\label{remTreeDistG}
Let $T=(V,E)$ be a tree and  $t\ge 2$ an integer. For any  $w=z^{j-1}xx_{j+1}\cdots x_t$ and $w'=z^{j-1}yy_{j+1}\cdots y_t$, where $1\le j\le t-1$,  $x,y,z\in V$, $x\ne y$, and $w,w'\in V^t$,
\begin{align*}
d_{S(T,t)}(w,w')=&d_{S(T,t-j)}\left(\left(x'\right)^{t-j},x_{j+1}\cdots x_t\right)+d_{S(T,t-j)}\left(\left(y'\right)^{t-j},y_{j+1}\cdots y_t \right)\\
&+(2^{t-j+1}-1)d_T(x,y)-2(2^{t-j}-1),
\end{align*}
where $x'$ and $y'$ are the neighbours of $x$ and $y$ lying on the path between $x$ and $y$, respectively.
\end{remark}

The \textit{eccentricity} $\epsilon(v)$ of a vertex $v$ in a connected graph $G$ is the maximum distance between $v$ and any other vertex $u$ of $G$. The \textit{diameter} of $G$ is defined as $$D(G)=\displaystyle\max_{v\in V(G)}\{\epsilon(v)\},$$
and the \textit{radius} of $G$ is defined as $$r(G)=\displaystyle\min_{v\in V(G)}\{\epsilon(v)\}.$$ For a vertex $v$, each vertex at distance $\epsilon(v)$ from $v$ is an \emph{eccentric vertex} for $v$. A \emph{leaf} in a tree is a vertex of degree one, while a \emph{support vertex} is a vertex adjacent to a leaf.

\begin{remark}\label{LemmaIsLeaf}
Let $u$ and $v$ be two different vertices in a tree $T$. If   $v$ is an eccentric vertex for $u$, then $v$ is a leaf and $\epsilon(v)=D(T)$.
\end{remark}

From now on we will assume that $T$ has order $n\ge 3$, as $S(K_2,t)\cong P_{2^t}$.

\begin{lemma}\label{lemmaFormulaEccTreesTemp}
Let $T=(V,E)$ be a tree of order greater than or equal to three, $u,v\in V$ and  $t\ge 2$ an integer. Then the following statements hold.
\begin{enumerate}[{\rm (i)}]
\item If $\epsilon(u)\ge \epsilon(v)$, then $\epsilon(u^t)\ge \epsilon(v^t)$.
\item 
$\epsilon(u^t)= \epsilon\left(z^{t-1}\right)+(2^t - 1)\epsilon(u)-(2^{t-1}-1) ,$
where $z$ is the support vertex of an eccentric vertex for $u$.
\end{enumerate}
\end{lemma}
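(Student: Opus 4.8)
The plan is to prove (i) and (ii) \emph{simultaneously by induction on $t$}, deriving (ii) first and then extracting (i) from it. Throughout write $\epsilon_s(a):=\epsilon_{S(T,s)}(a^s)$, reserving $\epsilon(\cdot)$ for eccentricity in $T$, so that the goal in (ii) reads $\epsilon_t(u)=\epsilon_{t-1}(z)+(2^t-1)\epsilon(u)-(2^{t-1}-1)$. The engine is the decomposition of $S(T,t)$ into the subtrees $\langle V_x\rangle\cong S(T,t-1)$. Since $S(T,t)$ is a tree, the unique path from $u^t$ to any vertex of $\langle V_x\rangle$ with $x\ne u$ enters that copy through one gate vertex, namely $x(x')^{t-1}$, where $x'$ is the neighbour of $x$ on the $u$--$x$ path in $T$. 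By Theorem~\ref{remExtremeArbitraryVertices} (with $j=1$) together with Lemma~\ref{lemmaDGInside}, the distance from $u^t$ to this gate is exactly $(2^t-1)d_T(u,x)-(2^{t-1}-1)$, so the farthest vertex of $\langle V_x\rangle$ from $u^t$ sits at distance $(2^t-1)d_T(u,x)-(2^{t-1}-1)+\epsilon_{t-1}(x')$. Writing $g(x):=(2^t-1)d_T(u,x)+\epsilon_{t-1}(x')$, this yields the master identity
\[
\epsilon_t(u)=\max\Big\{\epsilon_{t-1}(u),\ \max_{x\ne u}\big(g(x)-(2^{t-1}-1)\big)\Big\}.
\]

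Taking $x=p$, an eccentric leaf of $u$ (so $d_T(u,p)=\epsilon(u)$) with support $z=x'$, the master identity gives the lower bound $\epsilon_t(u)\ge(2^t-1)\epsilon(u)-(2^{t-1}-1)+\epsilon_{t-1}(z)$. The crux is the matching upper bound $\max_{x\ne u}g(x)=g(p)$, which I would obtain in two steps using the $1$-Lipschitz property of eccentricity, $|\epsilon_{t-1}(a)-\epsilon_{t-1}(b)|\le d_{S(T,t-1)}(a^{t-1},b^{t-1})=(2^{t-1}-1)d_T(a,b)$ (last equality by Theorem~\ref{theoExtremeVertices}). Step one: if $x$ has a child $x''$ farther from $u$ then $g(x'')-g(x)=(2^t-1)+\epsilon_{t-1}(x)-\epsilon_{t-1}(x')\ge(2^t-1)-(2^{t-1}-1)=2^{t-1}>0$, so $g$ strictly increases moving away from $u$ and its maximum is attained at a leaf of $T$. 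Step two: for a leaf $\ell$ with support $\ell'$, since $d_T(u,\ell)\le\epsilon(u)$ we get $g(p)-g(\ell)=(2^t-1)(\epsilon(u)-d_T(u,\ell))+(\epsilon_{t-1}(z)-\epsilon_{t-1}(\ell'))$, so it suffices to show $\epsilon_{t-1}(\ell')\le\epsilon_{t-1}(z)$, which follows from the monotonicity statement (i) at level $t-1$ once we know $\epsilon(\ell')\le\epsilon(z)$.

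The decisive structural input is that $z$ has the \emph{largest possible} eccentricity among support vertices. Indeed, every support vertex is a non-leaf (as $T$ has order at least three), and by Remark~\ref{LemmaIsLeaf} any vertex of eccentricity $D(T)$ is a leaf; hence every support vertex has eccentricity at most $D(T)-1$. Conversely, $p$ is eccentric for $u$, so by Remark~\ref{LemmaIsLeaf} it is a diametral leaf with $\epsilon(p)=D(T)$, whence its support satisfies $\epsilon(z)=D(T)-1$. Thus $\epsilon(\ell')\le D(T)-1=\epsilon(z)$ for every leaf $\ell$, and step two closes. Finally the ``stay inside copy $u$'' term is dominated, since Lipschitz gives $\epsilon_{t-1}(u)-\epsilon_{t-1}(z)\le(2^{t-1}-1)d_T(u,z)=(2^{t-1}-1)(\epsilon(u)-1)\le(2^t-1)\epsilon(u)-(2^{t-1}-1)$. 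This proves (ii).

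For (i), I would feed (ii) into the comparison $\epsilon_t(u)-\epsilon_t(v)=(2^t-1)(\epsilon(u)-\epsilon(v))+\big(\epsilon_{t-1}(z_u)-\epsilon_{t-1}(z_v)\big)$, where $z_u,z_v$ are supports of eccentric leaves of $u,v$. Both eccentric leaves are diametral, so $\epsilon(z_u)=\epsilon(z_v)=D(T)-1$, and (i) at level $t-1$ forces the bracket to vanish, leaving $(2^t-1)(\epsilon(u)-\epsilon(v))\ge0$. The base case $t=2$ reduces all $\epsilon_{t-1}$ to eccentricities in $T=S(T,1)$, where (i) is trivial and the master identity gives (ii) at once. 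I expect the upper bound in (ii) to be the main obstacle: a crude Lipschitz estimate on $\epsilon_{t-1}(\ell')-\epsilon_{t-1}(z)$ is far too weak, and what rescues the argument is precisely the fact---supplied by Remark~\ref{LemmaIsLeaf}---that the support of a diametral leaf maximises eccentricity among all support vertices, so that the exponentially large penalty $(2^t-1)$ attached to distance in $g$ overwhelms every eccentricity fluctuation.
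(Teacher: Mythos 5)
Your proposal is correct and follows essentially the same route as the paper: decompose $S(T,t)$ into the copies $\langle V_x\rangle$, reduce the eccentricity of $u^t$ via Remark~\ref{remTreeDistG} to the quantity $(2^t-1)d_T(u,x)-(2^{t-1}-1)+\epsilon\big((x')^{t-1}\big)$, identify the optimal $x$ as an eccentric (hence diametral, by Remark~\ref{LemmaIsLeaf}) leaf of $u$, and run a joint induction in which statement (i) at level $t-1$ supplies the comparison $\epsilon\big((x')^{t-1}\big)\ge\epsilon\big((y')^{t-1}\big)$. Your two-step maximization of $g$ (monotone increase away from $u$ via the Lipschitz bound, then comparison among leaves through their supports) is a more carefully justified version of the step the paper asserts directly, but the underlying argument is the same.
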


\begin{proof}
Let $w=u^{i-1}xx_{i+1}\cdots x_t\in V^t$ and $w'=v^{j-1}yy_{j+1}\cdots y_t\in V^t$ such that $\epsilon(u^t)=d(u^t,w)$ and $\epsilon(v^t)=d(v^t,w')$. 
If $i>1$, then for any $u'\in N_T(u)$ we have that   
\begin{align*}
d_{S(T,t)}(u^t,u'u^{i-2}xx_{i+1}\cdots x_t)&>d_{S(T,t)}(u^{t},u(u')^{t-1})+d_{S(T,t)}(u'u^{t-1},u'u^{i-2}xx_{i+1}\cdots x_t)\\
&>d_{S(T,t)}(u'u^{t-1},u'u^{i-2}xx_{i+1}\cdots x_t)\\
&=d_{S(T,t)}(u^t,w),
\end{align*}
 which is a contradiction. Hence, $i=j=1$ and by Remark \ref{remTreeDistG} we have
\begin{equation}\label{EQEccExtTree1}
d_{S(T,t)}(u^t,w)=d_{S(T,t-1)}(x'\cdots x',x_{2}\cdots x_t)+(2^{t}-1)d_T(u,x)-(2^{t-1}-1)
\end{equation}
and
\begin{equation}
d_{S(T,t)}(v^t,w')=d_{S(T,t-1)}(y'\cdots y',y_{2}\cdots y_t)+(2^{t}-1)d_T(v,y)-(2^{t-1}-1),
\end{equation} 
where $x'$ is the neighbour of $x$ lying on the path between $x$ and $u$ and  $y'$ is the neighbour of $y$ lying on the path between $y$ and $v$.
From now on we assume that $\epsilon(u)\ge \epsilon(v)$ and then we will show that  $\epsilon(u^t)\ge \epsilon(v^t)$ by induction. Let  $t=2$. By \eqref{EQEccExtTree1}
$d_{S(T,2)}(u^2,w)=d_{T}(x',x_{2})+3d_T(u,x)-1\le (D(T)-1)+3\epsilon(u)-1$ and the equality holds for $x,x_2$ satisfying $d_T(x,u)=\epsilon(u)$ and $d_T(x_2,x)=D(T)$. Hence, 
$$
 \epsilon(u^2)=D(T)+3\epsilon(u)-2\ge D(T)+3\epsilon(v)-2=\epsilon(v^2).
$$
Our hypothesis is that $\epsilon(u^t)\ge \epsilon(v^t)$. For $x\in V$ such that $d_T(x,u)=\epsilon(u)$,  and taking $x'$ as the neighbour of $x$ lying on the path between $x$ and $u$, we have $\epsilon(x')=D(T)-1$ (by Remark \ref{LemmaIsLeaf}). Hence,    by hypothesis we have that $\epsilon((x')^t)\ge \epsilon((y')^t)$, for every internal vertex $y'$. Thus, \eqref{EQEccExtTree1} leads to
\begin{equation} \label{EQ-FormulaEccExtTree}
\epsilon(u^{t+1})=\epsilon((x')^t)+(2^{t+1}-1)\epsilon(u)-(2^{t}-1) \end{equation}
and, analogously,
\begin{equation}
\epsilon(v^{t+1})=\epsilon((y')^t)+(2^{t+1}-1)\epsilon(v)-(2^{t}-1), \end{equation}
which implies that $\epsilon(u^{t+1})\ge \epsilon(v^{t+1})$. Therefore, (i) follows by induction and (ii) by \eqref{EQ-FormulaEccExtTree}. 
\end{proof}

\begin{theorem}\label{EccentricityExtremeFormula}
Let $T=(V,E)$ be a tree of order greater than or equal to three. Then for any $u\in V$ and any integer $t\ge 1$, $$\epsilon(u^t)=(2^t-1)\epsilon(u)+(2^t-t-1)(D(T)-2).$$
\end{theorem}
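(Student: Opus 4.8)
The plan is to prove the closed formula by induction on $t$, keeping $u$ \emph{universally quantified}, since the recursion furnished by the previous lemma reduces a statement about $u$ at level $t$ to a statement about a \emph{different} vertex at level $t-1$. The driving tool is Lemma \ref{lemmaFormulaEccTreesTemp}(ii), which for $t\ge 2$ expresses $\epsilon(u^t)$ through the eccentricity of the shorter word $z^{t-1}$, where $z$ is the support vertex of an eccentric vertex for $u$.

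For the base case $t=1$ I would simply observe that $\epsilon(u^1)=\epsilon(u)$, while the right-hand side collapses to $(2^1-1)\epsilon(u)+(2^1-1-1)(D(T)-2)=\epsilon(u)$, so the identity holds trivially.

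For the inductive step, assume the formula at level $t-1$ holds for every vertex of $T$, and fix $u$. Let $z$ be the support vertex of an eccentric vertex for $u$. The key preliminary fact I would record is that $\epsilon(z)=D(T)-1$: by Remark \ref{LemmaIsLeaf} the eccentric vertex is a leaf $v$ with $\epsilon(v)=D(T)$, and since every vertex $w\ne v$ lies on the far side of the unique neighbour $z$ of $v$, we have $d(z,w)=d(v,w)-1$, whence $\epsilon(z)=\epsilon(v)-1=D(T)-1$ (using $D(T)\ge 2$, which is valid as $n\ge 3$). Then Lemma \ref{lemmaFormulaEccTreesTemp}(ii) gives
$$\epsilon(u^t)=\epsilon(z^{t-1})+(2^t-1)\epsilon(u)-(2^{t-1}-1),$$
and substituting the induction hypothesis in the form
$$\epsilon(z^{t-1})=(2^{t-1}-1)(D(T)-1)+(2^{t-1}-t)(D(T)-2)$$
turns the remaining task into a single algebraic identity.

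The only genuine computation is this last simplification, and I do not expect a real obstacle. The coefficient of $\epsilon(u)$ is already $(2^t-1)$, so it suffices to check that the $D(T)$-dependent terms combine correctly: grouping $(2^{t-1}-1)(D(T)-1)-(2^{t-1}-1)=(2^{t-1}-1)(D(T)-2)$ and adding $(2^{t-1}-t)(D(T)-2)$ yields $[(2^{t-1}-1)+(2^{t-1}-t)](D(T)-2)=(2^t-t-1)(D(T)-2)$, exactly the claimed coefficient. The two points to get right are thus the universal quantification over $u$ (so that the hypothesis legitimately applies to $z$ rather than $u$) and the identification $\epsilon(z)=D(T)-1$; once these are in place, the recursion telescopes into the stated closed form.
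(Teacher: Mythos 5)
Your proposal is correct and follows essentially the same route as the paper: induction on $t$ with the base case $t=1$, applying Lemma \ref{lemmaFormulaEccTreesTemp}(ii) together with the observation that the support vertex $z$ of an eccentric (hence diametral, by Remark \ref{LemmaIsLeaf}) leaf satisfies $\epsilon(z)=D(T)-1$, and then simplifying. The only difference is cosmetic (you step from $t-1$ to $t$ rather than from $t$ to $t+1$, and you spell out the justification of $\epsilon(z)=D(T)-1$ and the need to quantify the hypothesis over all vertices, both of which the paper leaves implicit).
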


\begin{proof}
We proceed by induction on $t$. The equality holds for $t=1$. Suppose that  $\epsilon(u^t)=(2^t-1)\epsilon(u)+(2^t-t-1)(D(T)-2).$
Then we have
\begin{align*}
\epsilon(u^{t+1})=&\epsilon(v^t)+(2^{t+1}-1)\epsilon(u)-(2^t-1) \text{ (by  Lemma \ref{lemmaFormulaEccTreesTemp} (ii))}\\
=&(2^t-1)\epsilon(v)+(2^t-t-1)(D(T)-2)+(2^{t+1}-1)\epsilon(u)-(2^t-1)\\
&\text{  (by hipothesis)}\\
=&(2^t-1)(D(T)-1)+(2^t-t-1)(D(T)-2)+(2^{t+1}-1)\epsilon(u)-(2^t-1)\\
&\text{  (as $v$ is the support of a diametral vertex)}\\ 
=&(2^{t+1}-1)\epsilon(u)+(2^{t+1}-(t+1)-1)(D(T)-2).
\end{align*}
Therefore, the result follows
\end{proof}

\begin{theorem}\label{DiameterSierpinskiTrees}
For any tree $T$ of order greater than or equal to three and any positive integer $t$,
$$D(S(T,t))=(3\cdot 2^t-2t-3)D(T)-4(2^t-t-1).$$
\end{theorem}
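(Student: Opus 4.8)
The plan is to use that $S(T,t)$ is a tree, so that $D(S(T,t))$ equals the length of a longest path, whose two endpoints are leaves of $S(T,t)$, and equivalently $D(S(T,t))=\max_v\epsilon(v)$. I would show that the claimed number is simultaneously a lower and an upper bound for this quantity. Throughout write $D=D(T)$ and record two elementary facts about $T$ itself: every eccentric vertex is a leaf (Remark~\ref{LemmaIsLeaf}), so a \emph{non-leaf} $a$ satisfies $\epsilon(a)\le D-1$ (a neighbour of $a$ off an $a$–$(\text{eccentric leaf})$ path would otherwise reach distance $D+1$); and the support $a'$ of a diametral leaf satisfies $\epsilon(a')=D-1$. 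The base case $t=1$ is immediate since $S(T,1)=T$, so I would assume $t\ge 2$ and feed Theorem~\ref{EccentricityExtremeFormula} at level $t-1$ into the computations below.

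For the lower bound I would exhibit an explicit far-apart pair. Let $a,b$ be the endpoints of a diametral path of $T$, so $d_T(a,b)=D$, and let $a',b'$ be their neighbours on that path. Put $w=a\,p$ and $w'=b\,q$, where $p,q\in V^{t-1}$ are eccentric vertices in $S(T,t-1)$ for the extreme vertices $(a')^{t-1}$ and $(b')^{t-1}$. Since the first letters $a\ne b$ differ, Remark~\ref{remTreeDistG} (with $j=1$) gives
\[
d_{S(T,t)}(w,w')=\epsilon_{S(T,t-1)}\!\big((a')^{t-1}\big)+\epsilon_{S(T,t-1)}\!\big((b')^{t-1}\big)+(2^{t}-1)D-2(2^{t-1}-1).
\]
Substituting $\epsilon_{S(T,t-1)}((a')^{t-1})=\epsilon_{S(T,t-1)}((b')^{t-1})=(2^{t-1}-1)(D-1)+(2^{t-1}-t)(D-2)$ from Theorem~\ref{EccentricityExtremeFormula} and simplifying, the right-hand side collapses to exactly $(3\cdot 2^{t}-2t-3)D-4(2^{t}-t-1)$, so $D(S(T,t))$ is at least the claimed value.

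For the upper bound I would take an arbitrary longest path and decompose it by the copies of $S(T,t-1)$ it meets. The sequence of copies it visits traces a path $q_0,q_1,\dots,q_\ell$ in $T$; the two end copies contribute at most $\epsilon_{S(T,t-1)}((q_1)^{t-1})$ and $\epsilon_{S(T,t-1)}((q_{\ell-1})^{t-1})$, each interior copy contributes $d_{S(T,t-1)}((q_{i-1})^{t-1},(q_{i+1})^{t-1})=2(2^{t-1}-1)$ by Theorem~\ref{theoExtremeVertices}, and the $\ell$ connecting edges contribute $\ell$. Inserting Theorem~\ref{EccentricityExtremeFormula}, the total length is at most $(2^{t-1}-1)\big[\epsilon(q_1)+\epsilon(q_{\ell-1})+2(\ell-1)\big]+\ell+2(2^{t-1}-t)(D-2)$, with $\ell\le D$. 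The decisive step is the inequality $\epsilon(a)+\epsilon(b)+2\,d_T(a,b)\le 4D-6$ for all non-leaves $a,b$, which with $a=q_1,b=q_{\ell-1}$ (necessarily non-leaves, being interior to the path) yields $\epsilon(q_1)+\epsilon(q_{\ell-1})+2(\ell-1)\le 4(D-1)$. This cap together with $\ell\le D$ bounds the length by the value computed in the lower bound, and a diametral path of $T$ attains both caps simultaneously; the degenerate traces $\ell\le 1$ give strictly smaller values for $D\ge2$.

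The hard part will be the upper-bound optimization, because the two desiderata conflict: one is tempted to push $d_T(x,y)$ all the way to $D$ while keeping both endpoint eccentricity terms maximal, but once $x,y$ are diametral leaves the relevant neighbours are supports with eccentricity only $D-1$, not $D$. The inequality $\epsilon(a)+\epsilon(b)+2\,d_T(a,b)\le 4D-6$ for non-leaves is what reconciles this trade-off; I would prove it from $\epsilon(a),\epsilon(b)\le D-1$ together with $d_T(a,b)\le D-2$, the latter because if $d_T(a,b)\ge D-1$ then a neighbour $a'$ of $a$ off the $a$–$b$ path satisfies $d_T(a',b)\ge D$, forcing $b$ to be a diametral endpoint and hence a leaf (Remark~\ref{LemmaIsLeaf}), a contradiction. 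A secondary subtlety worth flagging is conceptual rather than technical: unlike the eccentricity result, the diametral pair of $S(T,t)$ generally consists of \emph{non-extreme} leaves sitting in the end copies of a diametral path of $T$, which is precisely why the bound is not just $\max_u\epsilon(u^t)$.
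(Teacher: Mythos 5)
Your proposal is correct, and its lower-bound half is exactly the paper's argument: the same witness pair $uw_1,vw_2$ built from a diametral path of $T$ with eccentric suffixes for the supports $u',v'$, evaluated through Remark \ref{remTreeDistG} and Theorem \ref{EccentricityExtremeFormula}. Where you genuinely diverge is the upper bound. The paper compresses it into a single asserted inequality, citing only Remark \ref{remTreeDistG} and the monotonicity statement Lemma \ref{lemmaFormulaEccTreesTemp}\,(i); but those two facts alone do not resolve the trade-off you correctly flag, namely that pushing $d_T(x,y)$ up to $D$ forces the relevant neighbours $x',y'$ to be supports of diametral leaves with eccentricity only $D-1$. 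Your copy-by-copy decomposition of the longest path (which is really Remark \ref{remTreeDistG} unwound, using Theorem \ref{theoExtremeVertices} for the interior copies) together with the inequality $\epsilon(a)+\epsilon(b)+2\,d_T(a,b)\le 4D-6$ for non-leaves $a,b$ --- proved from $\epsilon(a),\epsilon(b)\le D-1$ and $d_T(a,b)\le D-2$ via Remark \ref{LemmaIsLeaf} --- is precisely the quantitative optimization the paper leaves implicit, and your arithmetic does land on $(3\cdot 2^t-2t-3)D-4(2^t-t-1)$ with both caps attained simultaneously by the diametral configuration. The only points to tighten in a write-up are the degenerate traces $\ell\in\{0,1\}$, which you already note give strictly smaller values for $D\ge 2$ (for $\ell=0$ an induction on $t$ is the cleanest justification), and the observation that $q_1,q_{\ell-1}$ are interior to a path of $T$ and hence non-leaves, which you also make. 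In short: same skeleton as the paper, but your version supplies the missing justification for maximality and is the more complete proof.
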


\begin{proof}
Let  $u,u',v,v'\in V$ and $w_1,w_2\in V^{t-1}$ such that $d_T(u,v)=D(T)$, $u'$ and $v'$ are the support vertices of $u$ and $v$, respectively,  $d_{S(T,t-1)}((u')^{t-1},w_1)=\epsilon((u')^{t-1})$ and $d_{S(T,t-1)}((v')^{t-1},w_2)=\epsilon((v')^{t-1})$. By Remark \ref{remTreeDistG} and Lemma \ref{lemmaFormulaEccTreesTemp} (i) we have that for any $x,y\in V$ and $w,w'\in V^{t-1}$,
$$d_{S(T,t)}(uw_1,vw_2)=\epsilon((u')^{t-1})+\epsilon((v')^{t-1})+(2^t-1)D(T)-2(2^{t-1}-1)\ge d_{S(T,t)}(xw,yw').$$
Therefore, $uw_1$ and $ vw_2$ are diametral vertices and so Theorem \ref{EccentricityExtremeFormula} leads to 
$$
D(S(T,t))=d_{S(T,t)}(uw_1,vw_2)=(3\cdot 2^t-2t-3)D(T)-4(2^t-t-1).
$$
\end{proof}

It is well known that if $D(T)$ is even, then $r(T)=\frac{1}{2}D(T)$, otherwise, $r(T)=\frac{1}{2}(D(T)+1)$. Now, by Theorem \ref{DiameterSierpinskiTrees} we have that $D(T)$ is even if and only if $D(S(T,t))$ is even, so that we deduce the following result.

\begin{theorem}
For any tree $T$ of order greater than or equal to three and any positive integer $t$,
\[
r(S(T, t))= \left\{ \begin{array}{ll}
\dfrac{1}{2}(3 \cdot 2^t - 2t - 3)D(T)-2(2^t - t - 1), & \text{for } D(T)\text{ even;}\\
&\\
\dfrac{1}{2}\left( (3 \cdot 2^t - 2t - 3)D(T) - 2^{t + 2} + 4t +5\right), & \text{otherwise.}
\end{array}
\right.
\]
\end{theorem}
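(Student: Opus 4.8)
The plan is to combine three ingredients: the fact that $S(T,t)$ is itself a tree, the classical relationship between radius and diameter in a tree, and the explicit diameter formula from Theorem~\ref{DiameterSierpinskiTrees}. Since $S(T,t)$ is a tree (as recalled at the start of this section), its radius satisfies $r(S(T,t))=\tfrac{1}{2}D(S(T,t))$ when $D(S(T,t))$ is even and $r(S(T,t))=\tfrac{1}{2}\bigl(D(S(T,t))+1\bigr)$ when $D(S(T,t))$ is odd. Thus the whole problem reduces to (a) determining the parity of $D(S(T,t))$ and (b) substituting the closed form for $D(S(T,t))$ and simplifying.

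First I would settle the parity. Writing $D(S(T,t))=(3\cdot 2^t-2t-3)D(T)-4(2^t-t-1)$, I observe that the subtracted term $4(2^t-t-1)$ is a multiple of $4$ and hence even, while the coefficient $3\cdot 2^t-2t-3$ is odd for every $t\ge 1$ (both $3\cdot 2^t$ and $2t$ being even). Consequently $D(S(T,t))$ and $D(T)$ have the same parity, which is precisely the claim made just before the statement and which tells us which branch of the tree radius formula applies in each case.

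Next I would treat the two cases. When $D(T)$ is even, $D(S(T,t))$ is even, so $r(S(T,t))=\tfrac12 D(S(T,t))=\tfrac12(3\cdot 2^t-2t-3)D(T)-2(2^t-t-1)$, which is the first branch. When $D(T)$ is odd, $D(S(T,t))$ is odd, so $r(S(T,t))=\tfrac12\bigl(D(S(T,t))+1\bigr)$; rewriting $-4(2^t-t-1)=-2^{t+2}+4t+4$ and adding $1$ yields $\tfrac12\bigl((3\cdot 2^t-2t-3)D(T)-2^{t+2}+4t+5\bigr)$, the second branch.

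There is no serious obstacle here: once the diameter formula is in hand, the only substantive step is the parity check, and the remainder is routine algebra. As a sanity check I would verify the degenerate case $t=1$, where $S(T,1)=T$ and the formula must reduce to the classical $r(T)=\tfrac12 D(T)$ (resp.\ $\tfrac12(D(T)+1)$); substituting $t=1$ collapses the coefficient $3\cdot 2^t-2t-3$ to $1$ and the constants to the expected values, confirming consistency.
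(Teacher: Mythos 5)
Your proposal is correct and follows essentially the same route as the paper: it invokes the classical tree identity $r=\lceil D/2\rceil$, notes that $D(S(T,t))$ and $D(T)$ share the same parity because the coefficient $3\cdot 2^t-2t-3$ is odd and $4(2^t-t-1)$ is even, and then substitutes the diameter formula of Theorem~\ref{DiameterSierpinskiTrees}. The only difference is that you spell out the parity check and the algebra explicitly, which the paper leaves implicit.
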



\begin{thebibliography}{10}
\expandafter\ifx\csname url\endcsname\relax
  \def\url#1{\texttt{#1}}\fi
\expandafter\ifx\csname urlprefix\endcsname\relax\def\urlprefix{URL }\fi

\bibitem{2015arXiv151007982E}
A.~{Estrada-Moreno}, E.~D. {Rodr{\'{\i}}guez-Bazan}, J.~A.
  {Rodr{\'{\i}}guez-Vel{\'a}zquez}, {On the General Randi\'{c} index of
  polymeric networks modelled by generalized Sierpi\'{n}ski graphs},
  arXiv:1510.07982 [math.CO].

\bibitem{Geetha2015}
J.~Geetha, K.~Somasundaram, {Total coloring of generalized {S}ierpi\'nski
  graphs}, The Australasian Journal of Combinatorics 63 (2015) 58--69.

\bibitem{Gravier...Parreau}
S.~Gravier, M.~Kov\v{s}e, M.~Mollard, J.~Moncel, A.~Parreau, {New results on
  variants of covering codes in Sierpi\'{n}ski graphs}, Designs, Codes and
  Cryptography 69~(2) (2013) 181--188.
\newline\urlprefix\url{http://dx.doi.org/10.1007/s10623-012-9642-1}

\bibitem{GeneralizedSierpinski}
S.~Gravier, M.~Kov\v{s}e, A.~Parreau, {Generalized Sierpi\'{n}ski graphs}, in:
  Posters at EuroComb'11, R\'enyi Institute, Budapest, 2011.
\newline\urlprefix\url{http://www.renyi.hu/conferences/ec11/posters/parreau.pdf}

\bibitem{Hinz2012}
A.~Hinz, D.~Parisse, {The Average Eccentricity of Sierpi\'{n}ski Graphs},
  Graphs and Combinatorics 28~(5) (2012) 671--686.
\newline\urlprefix\url{http://dx.doi.org/10.1007/s00373-011-1076-4}

\bibitem{Hirtz-Holz}
A.~M. Hinz, C.~H. auf~der Heide, {An efficient algorithm to determine all
  shortest paths in Sierpi\'{n}ski graphs}, Discrete Applied Mathematics 177
  (2014) 111--120.
\newline\urlprefix\url{http://www.sciencedirect.com/science/article/pii/S0166218X14002649}

\bibitem{Hinz2013}
A.~M. Hinz, S.~Klav\v{z}ar, U.~Milutinovi\'{c}, C.~Petr, The Tower of Hanoi --
  Myths and Maths, Birkh\"auser/Springer Basel, 2013.

\bibitem{Klavzar2016(survey)}
A.~M. Hinz, S.~Klav{\v{z}}ar, S.~S. Zemlji{\v{c}}, {A survey and classification
  of Sierpi\'{n}ski-type graphs}, Submitted.

\bibitem{Klavzar1997}
S.~Klav\v{z}ar, U.~Milutinovi\'{c}, {Graphs $S(n,k)$ and a variant of the Tower
  of Hanoi problem}, Czechoslovak Mathematical Journal 47~(1) (1997) 95--104.
\newline\urlprefix\url{http://dml.cz/dmlcz/127341}

\bibitem{Klavzar2002}
S.~Klav\v{z}ar, U.~Milutinovi\'c, C.~Petr, {1-Perfect codes in Sierpi\'{n}ski
  graphs}, Bulletin of the Australian Mathematical Society 66~(3) (2002)
  369--384.
\newline\urlprefix\url{http://journals.cambridge.org/action/displayAbstract?fromPage=online&aid=4835448&fileId=S0004972700040235}

\bibitem{Klavsar-Peterin}
S.~Klav\v{z}ar, I.~Peterin, S.~S. Zemlji\v{c}, {Hamming dimension of a
  graph--the case of Sierpi\'{n}ski graphs}, European Journal of Combinatorics
  34~(2) (2013) 460 -- 473.
\newline\urlprefix\url{http://www.sciencedirect.com/science/article/pii/S0195669812001709}

\bibitem{Klavsar-Zeljic}
S.~Klav\v{z}ar, S.~S. Zemlji\v{c}, {On distances in Sierpi\'{n}ski graphs:
  Almost-extreme vertices and metric dimension}, Applicable Analysis and
  Discrete Mathematics 7~(1) (2013) 72--82.
\newline\urlprefix\url{http://pefmath.etf.rs/vol7num1/AADM-Vol7-No1-72-82.pdf}

\bibitem{Ramezani2016}
F.~Ramezani, E.~D. Rodriguez-Bazan, J.~A. Rodr\'{i}guez-Vel\'{a}zquez, On the
  {R}oman domination number of generalized {S}ierpi\'{n}ski graphs,
  arXiv:1605.06918 [math.CO], to appear.
\newline\urlprefix\url{https://arxiv.org/pdf/1605.06918v1.pdf}

\bibitem{Rodriguez-Velazquez2016}
J.~A. Rodr\'{\i}guez-Vel\'{a}zquez, E.~Estaji, {The strong metric dimension of
  generalized Sierpi\'{n}ski graphs with pendant vertices}, Ars Mathematica
  Contemporanea (2016) To appear.

\bibitem{Rodriguez-Velazquez2015a}
J.~A. Rodr\'{\i}guez-Vel\'{a}zquez, E.~D. Rodr\'{i}guez-Bazan,
  A.~Estrada-Moreno, {On Generalized Sierpi\'{n}ski Graphs}, Discussiones
  Mathematicae Graph Theory, to appear.
\newline\urlprefix\url{http://de.arxiv.org/pdf/1509.03047v1}

\bibitem{Rodriguez-Velazquez2015}
J.~A. Rodr\'iguez-Vel\'azquez, J.~Tom\'as-Andreu, {On the Randi\'c Index of
  Polymer Networks Modelled by Generalized Sierpi\'nski Graphs}, MATCH
  Communications in Mathematical and in Computer Chemistry 74~(1) (2015)
  145--160.
\newline\urlprefix\url{http://match.pmf.kg.ac.rs/electronic_versions/Match74/n1/match74n1_145-160.pdf}

\bibitem{Xue2014}
B.~Xue, L.~Zuo, G.~Wang, G.~Li, Shortest paths in {S}ierpi\'nski graphs,
  Discrete Applied Mathematics 162 (2014) 314--321.
\newline\urlprefix\url{http://dx.doi.org/10.1016/j.dam.2013.08.029}

\end{thebibliography}
\end{document}